\documentclass[11pt, a4paper]{amsart}

% LOAD PACKAGES

\usepackage{amsmath, amssymb, amsthm}
\usepackage[english]{babel}
\usepackage[all]{xy}
\usepackage{graphicx}
\usepackage{tikz}
\usetikzlibrary{matrix}
\usepackage{verbatim}
\usepackage{enumerate}
\usepackage{url}
\usepackage{subfig}
\usepackage{bm}% bold math
\usepackage{xcolor}

% BEGIN DOCUMENT

\newtheorem{thm}{Theorem}[section]
\newtheorem{lem}[thm]{Lemma}

\newtheorem{prop}[thm]{Proposition}
\theoremstyle{definition}
\newtheorem{defn}[thm]{Definition}
\theoremstyle{remark}
\newtheorem{rem}[thm]{Remark}

\newcommand{\imagescaling}{1}
\newcommand{\note}[1]{}

\newcommand{\PH}{\phantom{\Sp}}
\newcommand{\dev}{\delta}

\newcommand{\tGt}{\tilde G^{(2)}}
\newcommand{\tGf}{\tilde G^{(4)}}

\newcommand{\CS}{\K}
\newcommand{\HC}[2]{[#1\to#2]}

\newcommand{\Cyc}{\textsf{\textbf{C}}}
\newcommand{\Net}{\textsf{\textbf{N}}}

\newcommand{\Mfld}{\mathcal{M}}

\newcommand{\Tornmo}{\Tor^{\maxdim-1}}
\newcommand{\Torm}{\Tor^{\maxpop}}

\newcommand{\ph}{\frac{\pi}{2}}

\newcommand{\al}{\alpha}

\newcommand{\h}{h}
\newcommand{\hloc}{\h^{\textrm{loc}}}
\newcommand{\hglob}{\h^{\textrm{gl}}}

\newcommand{\Hin}{H^{\textrm{in}}}
\newcommand{\Hout}{H^{\textrm{out}}}
\newcommand{\ve}[2]{\left(\begin{array}{c}#1\\#2\end{array}\right)}

\newcommand{\Wu}{W^\textnormal{u}}
\newcommand{\Ws}{W^\textnormal{s}}

\newcommand{\SDpp}{\Sp\Dp\psi_3\psi_4}
\newcommand{\SDpD}{\Sp\Dp\psi_3\Dp}
\newcommand{\SDDp}{\Sp\Dp\Dp\psi_4}
\newcommand{\SDDD}{\Sp\Dp\Dp\Dp}

\newcommand{\sfa}{\textsc{a}}
\newcommand{\sfb}{\textsc{b}}

\newcommand{\si}{\boldsymbol{\sigma}}

\newcommand{\lft}[1]{#1}
\newcommand{\Ph}{\lft{P}}
\newcommand{\Lh}{\lft{L}}
\newcommand{\Th}{\lft{T}}
\newcommand{\Tg}{\mathrm{T}}

\newcommand{\Dal}{\Delta\alpha}

\newcommand{\M}{\textit{\textsf{M}}}

\newcommand{\cA}{\textnormal{(A)}}
\newcommand{\cB}{\textnormal{(B)}}
\newcommand{\cC}{\textnormal{(C)}}

\newcommand{\maxdim}{N}
\newcommand{\maxpop}{M}

\newcommand{\Ctc}{\check\Cyc_2}
\newcommand{\Cth}{\hat\Cyc_2}

\newcommand{\sic}{\check\si}
\newcommand{\sih}{\hat\si}

\DeclareMathOperator{\sgn}{sgn}

\DeclareMathOperator{\Fix}{Fix}

\newcommand{\Splay}{\Dp}
\newcommand{\Sync}{\Sp}

\newcommand{\vth}{\vartheta}

\newcommand{\itw}[1]{#1_2}
\newcommand{\ifo}[1]{#1_4}

\newcommand{\aaa}{\alpha}
\newcommand{\at}{\itw{\aaa}}
\newcommand{\af}{\ifo{\aaa}}

\newcommand{\ud}{\mathrm{d}}
\newcommand{\udi}{\,\ud}

\DeclareMathOperator{\Rep}{Re}

\renewcommand{\Re}{\Rep}

\newcommand{\R}{\mathbb{R}}
\newcommand{\Rn}{\R^d}

\newcommand{\Z}{\mathbb{Z}}
\newcommand{\N}{\mathbb{N}}

\newcommand{\Zm}{\Z_\maxpop}

\newcommand{\lmax}{\lambda^{\max}}
\newcommand{\wmax}{u^\textnormal{max}}

\newcommand{\Tor}{\mathbf{T}}
\newcommand{\Torn}{\Tor^\maxdim}

\newcommand{\Tormn}{\Tor^{\maxpop\maxdim}}

\newcommand{\g}{\gamma}
\newcommand{\G}{\Gamma}

\newcommand{\Ss}{\mathbf{S}}

\newcommand{\Sn}{\Ss_\maxdim}

\newcommand{\abs}[1]{\left|#1\right|}

\newcommand{\rset}[2]{\left\lbrace\, #1\,\left|\;#2\right.\right\rbrace}
\newcommand{\lset}[2]{\left\lbrace\left. #1\;\right|\,#2\,\right\rbrace}
\newcommand{\set}[2]{\rset{#1}{#2}}

\newcommand{\sset}[1]{\left\lbrace #1\right\rbrace}

\newcommand{\Sp}{\mathrm{S}}
\newcommand{\Dp}{\mathrm{D}}

\newcommand{\DSS}{{\Dp\Sp\Sp}}
\newcommand{\SDS}{{\Sp\Dp\Sp}}
\newcommand{\SSD}{{\Sp\Sp\Dp}}
\newcommand{\DDS}{{\Dp\Dp\Sp}}
\newcommand{\SDD}{{\Sp\Dp\Dp}}
\newcommand{\DSD}{{\Dp\Sp\Dp}}

\newcommand{\SSSS}{{\Sp\Sp\Sp\Sp}}

\newcommand{\DSSS}{{\Dp\Sp\Sp\Sp}}
\newcommand{\SDSS}{{\Sp\Dp\Sp\Sp}}
\newcommand{\SSDS}{{\Sp\Sp\Dp\Sp}}
\newcommand{\DDSS}{{\Dp\Dp\Sp\Sp}}
\newcommand{\SDDS}{{\Sp\Dp\Dp\Sp}}
\newcommand{\DSDS}{{\Dp\Sp\Dp\Sp}}
\newcommand{\SSSD}{{\Sp\Sp\Sp\Dp}}
\newcommand{\DSSD}{{\Dp\Sp\Sp\Dp}}
\newcommand{\SDSD}{{\Sp\Dp\Sp\Dp}}

\newcommand{\po}{\psi_1}
\newcommand{\pt}{\psi_2}
\newcommand{\pr}{\psi_3}
\newcommand{\pf}{\psi_4}

\newcommand{\pppS}{{\po\pt\pr\Sp}}
\newcommand{\ppSp}{{\po\pt\Sp\pf}}

\newcommand{\DpS}{{\Dp\psi\Sp}}
\newcommand{\pDS}{{\psi\Dp\Sp}}

\newcommand{\DSp}{{\Dp\Sp\psi}}

\newcommand{\K}{K}
\newcommand{\B}{\mathcal B}

\newcommand{\ind}{\textnormal{ind}}

\begin{document}

%\preprint{APS/123-QED}

%%%%%%%%%%%%% TITLE

\title[Heteroclinic Networks of Localized Frequency Synchrony]{Heteroclinic Dynamics of Localized Frequency Synchrony: Stability of Heteroclinic Cycles and Networks}% Force line breaks with \\
\author[Christian Bick and Alexander Lohse]{Christian Bick${}^\blacktriangledown$ and Alexander Lohse${}^\blacktriangle$}%

\address{${}^\blacktriangledown$Centre for Systems Dynamics and Control and Department of Mathematics, University of Exeter, EX4~4QF, UK}
\address{${}^\blacktriangle$Universit\"at Hamburg, Fachbereich Mathematik, Bundesstra{\ss}e 55, 20146 Hamburg, Germany}
\date{\today}

\maketitle

\begin{abstract}
In the first part of this paper, we showed that three coupled populations of identical phase oscillators give rise to heteroclinic cycles between invariant sets where populations show distinct frequencies. Here, we now give explicit stability results for these heteroclinic cycles for populations consisting of two oscillators each. In systems with four coupled phase oscillator populations, different heteroclinic cycles can form a heteroclinic network. While such networks cannot be asymptotically stable, the local attraction properties of each cycle in the network can be quantified by stability indices. We calculate these stability indices in terms of the coupling parameters between oscillator populations. Hence, our results elucidate how oscillator coupling influences sequential transitions along a heteroclinic network where individual oscillator populations switch sequentially between a high and a low frequency regime; such dynamics appear relevant for the functionality of neural oscillators.
\end{abstract}

%%%%%%%%%%%%%%%%%
\section{Introduction}

Interacting populations of identical oscillators are capable of generating global dynamics that exhibit rapid transitions between metastable states where different populations are in different frequency regimes. Such dynamics can be caused by trajectories close to heteroclinic structures between invariant sets where frequency synchrony is local rather than global across all populations~\cite{Bick2017c}. In the first part of this paper~\cite{Bick2018a}, we showed the existence of heteroclinic cycles in three coupled small populations.

For such dynamics to be observable over longer timescales, the heteroclinic cycles have to have some stability properties. Apart from asymptotic stability and instability, heteroclinic cycles can display various intermediate forms of nonasymptotic attraction. These range from fragmentary asymptotic stability (``attracting more than nothing'') to essential asymptotic stability (``attracting almost everything''). Podvigina and Ashwin~\cite{Podvigina2011} introduced a stability index to quantify attraction along trajectories. This stability index is defined for any dynamically invariant set and thus provides a convenient tool to describe the stability of heteroclinic trajectories within a cycle or network\footnote{To avoid confusion in terminology, we reserve the word ``network'' for heteroclinic networks and talk about coupled (or interacting) populations of phase oscillators (rather than oscillator networks).}. Recently, Garrido-da-Silva and Castro~\cite{Garrido-da-Silva2016} derived explicit expressions for the stability indices for a fairly general class of heteroclinic cycles called quasi-simple. Such expressions are particularly useful to describe the stability of heteroclinic cycles that are part of a networks consisting of more than one cycle.

The main contributions of this paper are explicit stability results for heteroclinic cycles and networks between invariant sets with localized frequency synchrony in terms of the coupling parameter of the oscillator populations. Here we focus on coupled oscillator populations with two oscillators per population. Due to the existence of invariant subspaces, the heteroclinic cycles  are quasi-simple. Consequently, we apply the stability results of Garrido-da-Silva and Castro~\cite{Garrido-da-Silva2016} to calculate the stability indices. We first consider three coupled oscillator populations to calculate stability indices for the heteroclinic cycles in~\cite{Bick2018a}. We then show that four coupled oscillator populations support a heteroclinic network which contains two distinct heteroclinic cycles of the type considered before. Their stability properties are then calculated using the tools developed for three populations and we comment on the stability of the whole network. Since our stability conditions explicitly depend on the coupling parameters of the oscillator populations, our results elucidate how the coupling structure of the system shapes the asymptotic dynamical behavior. Moreover, they highlight the utility of the general stability results in~\cite{Garrido-da-Silva2016} for heteroclinic cycles on arbitrary manifolds.

The remainder of this paper is structured as follows. The following section summarizes facts on (robust) heteroclinic cycles, nonasymptotic stability, and coupled populations of phase oscillators. In Section~\ref{sec:cycles} we calculate the stability indices along the heteroclinic cycle in the first part of this paper~\cite{Bick2018a} for a system of three populations. Such cycles are contained in a heteroclinic network for four coupled populations as shown in Section~\ref{sec:networks}, and we calculate their stability properties. We also give some numerical results and comment on the stability of the network as a whole. Finally, we give some concluding remarks in Section~\ref{sec:disc}.

%%%%%%%%%%%%%%%%%
\section{Preliminaries}
\label{sec:prelim}

To set the stage, we review some results about heteroclinic cycles, their stability properties, and coupled populations of phase oscillators. In terms of notation, we will follow the first part of the paper~\cite{Bick2018a}.

%%%%%%%%%%
\subsection{Heteroclinic cycles and their stability}

Let~$\Mfld$ be a smooth $d$-dimensional manifold and let~$X$ be a smooth vector field on~$\Mfld$. Define the usual limit sets~$\alpha(x)$, $\omega(x)$ for the flow on~$\Mfld$ generated by~$X$ and $t\to\pm\infty$. For a hyperbolic equilibrium $\xi\in\Mfld$ we write
\begin{align*}
\Ws(\xi) &:= \set{x\in\Mfld}{\omega(x)=\xi}, & \Wu(\xi) &:= \set{x\in\Mfld}{\alpha(x)=\xi}
\end{align*}
to denote its stable and unstable manifold, respectively.

\begin{defn}\label{defn:HetCycle}
A \emph{heteroclinic cycle~$\Cyc$} consists of a finite number of hyperbolic equilibria~$\xi_q\in\Mfld$, $q=1,\dotsc,Q$, together with heteroclinic trajectories
\[\HC{\xi_q}{\xi_{q+1}} \subset \Wu(\xi_q)\cap \Ws(\xi_{q+1})\neq\emptyset\]
where indices are taken modulo~$Q$.

A \emph{heteroclinic network}~$\Net$ is a connected union of two or more distinct  heteroclinic cycles.
\end{defn}

For simplicity, we write $\Cyc=(\xi_1, \dotsc, \xi_Q)$. If~$\Mfld$ is a quotient of a higher-dimensional manifold and~$\Cyc$ is a heteroclinic cycle in~$\Mfld$, we also call the lift of~$\Cyc$ a heteroclinic cycle. The same goes for a heteroclinic network~$\Net$.

While heteroclinic cycles are in general a nongeneric phenomenon, they can be robust if all connections are of saddle-sink type in (lower-dimensional) subspaces. Let $\Cyc=(\xi_1, \dotsc, \xi_Q)$ be a heteroclinic cycle. If there are flow-invariant submanifolds~$P_q\subset\Mfld$ such that $\HC{\xi_q}{\xi_{q+1}}\subset P_q$ is a saddle-sink connection, then~$\Cyc$ is \emph{robust} with respect to perturbations of~$X$ which preserve the invariant sets~$P_q$. 

Robust heteroclinic cycles may arise for example in dynamical systems with symmetry. Let~$\G$ be a finite group which acts on~$\Mfld$. For a subgroup $H\subset\G$ define the set $\Fix(H) = \set{x\in\Mfld}{\g x=x\ \forall\g\in H}$  of points fixed under~$H$; this is a vector space that is invariant under the flow generated by~$X$. For $x\in\Mfld$ let $\Gamma x=\set{\g x}{\g\in\G}$ denote its group orbit and $\Sigma(x) = \set{\g\in\G}{\g x = x}$ its \emph{isotropy subgroup}. Now assume that the smooth vector field~$X$ is a $\G$-equivariant vector field on~$\Mfld$, that is, the action of the group commutes with~$X$. Any heteroclinic cycle with~$P_q = \Fix(\Sigma_q)$ where~$\Sigma_q$ are isotropy subgroups is robust to $\Gamma$-equivariant perturbations of~$X$, that is, $\G$-equivariant vector fields close to~$X$ will have a heteroclinic cycle close to~$\Cyc$; see~\cite{Krupa1997} for more details.

%%%%%%
\subsubsection{Nonasymptotic stability}
Heteroclinic cycles may have intricate nonasymptotic stability properties. We briefly recall some definitions that formalize these.

For $\varepsilon>0$, write $B_{\varepsilon}(A)$ for an $\varepsilon$-neighborhood of a set $A \subset \Rn$ and~$\B(A)$ for its basin of attraction, i.e., the set of points $x \in \Rn$ with $\omega(x) \subset A$. For $\delta>0$ the $\delta$-local basin of attraction is
\begin{equation*}
\B_\delta(A):=\set{x \in \B(A)}{\forall t>0\colon \Phi_t(x) \in B_\delta(A)},
\end{equation*}
where~$\Phi_t$ is the flow generated by~$X$. Let~$\ell$ denote the Lebesgue measure.

\begin{defn}[\cite{Podvigina2012}]
An invariant set~$A$ is \emph{fragmentarily asymptotically stable (f.a.s.)} if $\ell({\mathcal B}_\delta(A))>0$ for any $\delta>0$.
\end{defn}

Being f.a.s.\ is not necessarily a very strong form of attraction. A set that is not f.a.s.\ is usually called \emph{completely unstable}, see also \cite{Podvigina2012}. Melbourne \cite{Melbourne1991} introduces the stronger notion of essential asymptotic stability, which we quote here in the formulation of Brannath~\cite{Brannath1994}.

\begin{defn}[\cite{Brannath1994}, Definition 1.2]
A compact invariant set~$A$ is called {\em essentially asymptotically stable (e.a.s.)} if it is asymptotically stable relative to a set $\Upsilon \subset \Rn$ with the property that
\begin{equation}
\lim\limits_{\varepsilon \to 0} \frac{\ell(B_{\varepsilon}(A) \cap \Upsilon)}{\ell(B_\varepsilon(A))} = 1.\label{pas-property}
\end{equation}
\end{defn}

Podvigina and Ashwin~\cite{Podvigina2011} introduced the concept of a local stability index $\si(x)\in[-\infty,+\infty]$ to quantify stability and attraction. It is constant along trajectories, so to characterize stability/attraction of a heteroclinic cycle with one-dimensional connections, it suffices to consider finitely many stability indices. Let~$\si_q$ denote the stability index along $\HC{\xi_{q-1}}{\xi_{q}}$. For our purposes it is enough to note that (under some mild assumptions) a heteroclinic cycle $\Cyc=(\xi_1, \dotsc,\xi_Q)$ is completely unstable if $\si_q=-\infty$ for all~$q$, it is f.a.s.\ as soon as $\si_q>-\infty$ for some~$q$, and it is e.a.s.\ if and only if $\si_q>0$ for all $q=1,\dotsc,Q$. See \cite[Theorem 3.1]{Lohse2015} for details.

%%%%%%%%%
\subsubsection{Stability of quasi-simple heteroclinic cycles}

The stability indices can be calculated for specific classes of heteroclinic cycles. Let $\Cyc=(\xi_1, \dotsc, \xi_Q)$ be a robust heteroclinic cycle on~$\Mfld$. As above, denote the flow-invariant sets which contain the heteroclinic connections with~$P_q$. Let~$\Th_q := \Tg_{\xi_q}\Mfld$ denote the tangent space of~$\Mfld$ at~$\xi_q$. For subspaces $V\subset W\subset \Th_q$ write $W\ominus V$ for the orthogonal complement of~$V$ in~$W$. In slight abuse of notation, define~$\Ph_q^- := \Tg_{\xi_q} P_{q-1}$ and $\Ph_q^{+} := \Tg_{\xi_q} P_{q}$ to be the tangent spaces of~$P_{q-1}$ and~$P_q$ in~$\Th_q$, respectively. These are linear subspaces of~$T_q$ of the same dimension as~$P_{q-1}$ (which contains the incoming saddle connection) and~$P_q$ (containing the outgoing connection), respectively. Set $\Lh_q := \Ph_{q}^- \cap \Ph_q^+$.

\begin{defn}
\label{def:QuasiSimple}
The robust heteroclinic cycle~$\Cyc$ is \emph{quasi-simple} if
 $\dim(\Ph_q^- \ominus \Lh_q)=\dim(\Ph_q^+ \ominus \Lh_q)=1$\label{cond:Contr}
 for all~$q\in\sset{1, \dotsc, Q}$.
\end{defn}

\begin{rem}
Note that this is a slight generalization of the definition given by Garrido-da-Silva and Castro in~\cite{Garrido-da-Silva2016} to arbitrary manifolds. In particular, the condition in Definition~\ref{def:QuasiSimple} implies that $\dim(P_{q-1})=\dim(P_q)$.
\end{rem}

As usual, an eigenvalue of the Jacobian~$\ud X(\xi_q)$ is \emph{radial} if its associated eigenvector is in~$\Lh_q$, \emph{contracting} if the associated eigenvector is in $\Ph_{q}^-\ominus\Lh_q$, \emph{expanding} if the associated eigenvector is in $\Ph_{q}^+\ominus\Lh_q$, and \emph{transverse} otherwise. In other words, a cycle is quasi-simple  if it has unique expanding and contracting directions at each equilibrium, and thus one-dimensional saddle connections.

The standard way to analyze the stability of heteroclinic cycles is to write down a Poincar\'e return map with linearized dynamics local to the equilibria as well as globally along the connecting orbits; cf.~\cite{Krupa1995}. For quasi-simple cycles whose global maps are rescaled permutations of the local coordinate axes Garrido-da-Silva and Castro~\cite{Garrido-da-Silva2016} showed how their (asymptotic or nonasymptotic) stability can be calculated solely from the properties of the linearization of the equilibria at the cycle. More precisely, the stability of each equilibrium~$\xi_q$ along the cycle is encoded in a \emph{transition matrix}~$\M_q$ and the stability of the cycle is determined by properties of these matrices. We explain this technique in more detail when we apply it in Section~\ref{sec:cycles}. Note that this immediately implies that the results in~\cite{Garrido-da-Silva2016} carry over to our definition of a quasi-simple heteroclinic cycle since the stability does not depend on other global properties.

For ease of reference, we recall the stability results from~\cite[Theorems 3.4, 3.10]{Garrido-da-Silva2016} in a condensed form. For a heteroclinic cycle $\Cyc=(\xi_1,\dotsc,\xi_Q)$ with transition matrices~$\M_q$ set $\M^{(q)}:=\M_{q-1}\dotsb \M_1\M_Q\dotsb \M_{q+1}\M_q$. All~$\M^{(q)}$ have the same eigenvalues. If none of the~$\M_q$ has a negative entry---there are no repelling transverse directions---we have the following result, which is a dichotomy between asymptotic stability and complete instability.

\begin{prop}[{\cite[Theorem~3.4]{Garrido-da-Silva2016}}]\label{thm:3.4}
Let~$\Cyc$ be a quasi-simple heteroclinic cycle with rescaled permutation of the local coordinate axes as global maps and transition matrices~$\M_q$, $q=1,\dotsc,Q$.
Suppose that all entries of all~$\M_q$ are nonnegative.
  \begin{enumerate}[(i)]
  \item If $\M^{(1)}$ satisfies $\abs{\lmax}>1$, then $\si_q=+\infty$ for all $q=1,\dotsc,Q$ and the cycle~$\Cyc$ is asymptotically stable.
  \item Otherwise, $\si_q=-\infty$ for all $q=1,\dotsc,Q$ and~$\Cyc$ is completely unstable.
  \end{enumerate}
\end{prop}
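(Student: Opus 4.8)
The plan is to follow the classical Poincar\'e-map strategy of Krupa and Melbourne, adapted to quasi-simple cycles as in \cite{Garrido-da-Silva2016}. First I would fix small cross-sections $\Hin_q$ (transverse to the incoming connection) and $\Hout_q$ (transverse to the outgoing connection) near each equilibrium $\xi_q$, and split the tangent coordinates into the radial directions $\Lh_q$, the single contracting and single expanding direction (one-dimensional precisely by quasi-simplicity), and the remaining transverse directions. Using the linearization of the flow at the hyperbolic saddle $\xi_q$, the local map $\Hin_q\to\Hout_q$ can be computed explicitly: the expanding coordinate reaches the section size in time $\tau^\ast\sim\frac{1}{e_q}\log(1/v_e)$, and substituting this exit time into the contracting and transverse coordinates shows that, after passing to logarithmic coordinates, the local map is affine with slopes given by ratios of eigenvalues. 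The hypothesis that the global maps are rescaled permutations of the coordinate axes means that in the same logarithmic coordinates they act as a permutation plus a bounded shift. Composing one full circuit then yields a return map to $\Hin_1$ that is, up to a bounded additive error, the linear map $w\mapsto\M^{(1)}w$, where $\M^{(1)}=\M_Q\dotsb\M_1$ is exactly the transition-matrix product in the statement.

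Second, I would exploit that every $\M_q$, and hence $\M^{(1)}$, has only nonnegative entries. By the Perron--Frobenius theorem the spectral radius of $\M^{(1)}$ equals a nonnegative real eigenvalue with nonnegative eigenvector, so $\abs{\lmax}$ is attained and the positive cone is forward-invariant under $w\mapsto\M^{(1)}w$. Nonnegativity is the decisive structural input: it is equivalent to the absence of repelling transverse directions, so that a trajectory entering a neighborhood of $\Cyc$ cannot be ejected sideways and the only competition is between motion along the cycle and the overall proximity to it, as encoded by $w$. The magnitude $\abs{\lmax}$ is then the per-circuit growth factor of the logarithmic coordinate measuring $-\log(\mathrm{dist})$ to $\Cyc$, so that $\abs{\lmax}>1$ produces attraction and $\abs{\lmax}\le 1$ repulsion.

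Third, I would read off the stability indices. If $\abs{\lmax}>1$, iterates of $\M^{(1)}$ on the invariant positive cone grow without bound, meaning every nearby trajectory is drawn monotonically closer to $\Cyc$ on each return; since the cone fills the whole relevant orthant there are no gaps in the basin, so $\B(\Cyc)$ contains a full neighborhood, giving asymptotic stability and $\si_q=+\infty$ for all $q$ via the characterization recalled above (cf.\ \cite{Lohse2015}). If instead $\abs{\lmax}\le 1$, that coordinate does not grow, each return pushes points out of any fixed small neighborhood, and one checks that $\ell(\B_\delta(\Cyc))=0$ for small $\delta$, so $\si_q=-\infty$ for all $q$ and $\Cyc$ is completely unstable. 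Because the index is constant along each connection it suffices to verify this at one section and propagate it, using that all cyclic products $\M^{(q)}$ share the eigenvalues of $\M^{(1)}$.

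The main obstacle is not the linear-algebra dichotomy but controlling the passage from the genuine nonlinear return map to its linear model uniformly on a neighborhood: one must show that the bounded additive errors and the nonlinear corrections to the local linearization neither alter the sign of the growth rate nor create a positive-measure set of escaping initial conditions in the borderline case $\abs{\lmax}=1$, and in part~(i) that the \emph{full} (not merely large-measure) neighborhood lies in $\B(\Cyc)$. Here the nonnegativity hypothesis does the essential work, collapsing the generically intricate quasi-simple stability analysis---where intermediate fragmentary or essential asymptotic stability can occur---into the clean two-way split between asymptotic stability and complete instability.
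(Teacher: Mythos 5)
The paper itself offers no proof of this proposition: it is quoted verbatim from Garrido-da-Silva and Castro~\cite{Garrido-da-Silva2016}, and the only original content the authors add is the remark that the result carries over to their slightly generalized (manifold) definition of quasi-simple cycles because stability is determined entirely by the transition matrices. Your Poincar\'e-map construction---cross sections, local linearization, logarithmic coordinates turning local maps into linear ones and global maps into permutations plus bounded shifts, so that one circuit is $w\mapsto\M^{(1)}w$ up to bounded error---is exactly the methodology of the cited source and of Section~3 of this paper, so your overall route is the intended one rather than a different one.

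However, there is a genuine gap at the decisive step. Deducing part~(i) from ``nonnegativity $+$ Perron--Frobenius'' alone does not work: for a general nonnegative matrix, $\abs{\lmax}>1$ with a nonnegative Perron eigenvector does \emph{not} imply that the whole negative orthant (the logarithmic image of a full neighborhood of the cycle) is pushed to $-\infty$ in every component under iteration. The nonnegative matrix $\mathrm{diag}(2,\tfrac12)$ has $\lmax=2$, yet the second coordinate of any iterate tends to $0$, i.e., the corresponding trajectory escapes transversally; a nonnegative reducible $\M^{(1)}$ can therefore have $\lmax>1$ and still fail to attract a full neighborhood. So your assertion that ``the cone fills the whole relevant orthant, there are no gaps in the basin'' is precisely the claim requiring proof, and it is false without further input. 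What rescues the theorem is the specific structure of quasi-simple transition matrices, which your sketch never invokes beyond nonnegativity: every $a_q=c_q/e_q$ is strictly positive, and hyperbolicity forces the nonnegative entries $b_q=-t_q/e_q$ to be strictly positive (not merely $\geq 0$), so the cyclic products $\M^{(q)}$ automatically satisfy conditions \cA--\cC\ when $\lmax>1$ and one can show $U^{-\infty}\big(\M^{(1)}\big)=\R^d_-$; this is the content of Lemmas~3.2--3.6 of~\cite{Garrido-da-Silva2016}, on which the cited proof of Theorem~3.4 rests. The same structural input, not just a sign-of-growth-rate argument, is what settles the borderline case $\lmax=1$ in part~(ii), where the additive constants $\log B_q$, $\log D_q$ of the affine return map genuinely matter---an obstacle you correctly flag but leave unresolved.
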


If the transition matrices~$\M_q$ contain negative entries---there are transversely repelling directions, for example, if the cycle is part of a network---then additional criteria have to be satisfied in order for the cycle to possess some form of nonasymptotic  stability. For a matrix~$\M$ let~$\lmax$ denote the maximal eigenvalue and $\wmax=(\wmax_1,\dotsc,\wmax_d)$ the corresponding eigenvector. Define the conditions (cf.~\cite[Lemma 3.2]{Garrido-da-Silva2016})
\begin{itemize}
  \item[\cA] $\lmax$ is real.
  \item[\cB] $\lmax>1$.
  \item[\cC] $\wmax_m \wmax_n>0$ for all $m,n=1, \dotsc, d$.
\end{itemize}
Generally, stability indices are evaluated as a function of the local stability properties at the equilibrium points~\cite{Podvigina2011}; for quasi-simple cycles in arbitrary dimension, Garrido-da-Silva and Castro~\cite{Garrido-da-Silva2016} denote this function by~$F^{\ind}$. Later on, we will consider three-dimensional transition matrices and for $0\neq \beta=(\beta_1, \beta_2, \beta_3) \in \R^3$, this function reads
\begin{align*}
F^{\ind}(\beta):=
\begin{cases}
 +\infty \quad & 
 \begin{array}{l}
 \textnormal{if }\min(\beta_1,\beta_2,\beta_3)\geq 0,
 \end{array}\\
 -\frac{\beta_1+\beta_2+\beta_3}{\min(\beta_1,\beta_2,\beta_3)} \quad &
 \begin{array}{l}
 \textnormal{if }\beta_1+\beta_2+\beta_3>0\\
 \textnormal{\quad and }\min(\beta_1,\beta_2,\beta_3)<0,
 \end{array}\\
 0 \quad &
 \begin{array}{l}
 \textnormal{if}\ \beta_1+\beta_2+\beta_3=0,
 \end{array}\\
 \frac{\beta_1+\beta_2+\beta_3}{\max(\beta_1,\beta_2,\beta_3)} \quad & 
 \begin{array}{l}
 \textnormal{if}\ \beta_1+\beta_2+\beta_3<0\\ 
 \quad\textnormal{ and }\max(\beta_1,\beta_2,\beta_3)>0,
 \end{array}\\
  -\infty \quad &
 \begin{array}{l}  
  \textnormal{if }\max(\beta_1,\beta_2,\beta_3)\leq 0.
 \end{array}\\  
\end{cases}
\end{align*}
The following proposition summarizes the second stability result adapted to our setting.

\begin{prop}[{\cite[Theorem~3.10]{Garrido-da-Silva2016}}]\label{thm:3.10}
Let~$\Cyc$ be a quasi-simple heteroclinic cycle with rescaled permutation of the local coordinate axes as global maps and transition matrices~$\M_q$, $q=1,\dotsc,Q$. 
Suppose that at least one~$\M_q$ has at least one negative entry.
  \begin{itemize}
  \item[(a)] If there is at least one~$q$ such that the matrix~$\M^{(q)}$ does not satisfy conditions \cA--\cC, then $\si_q=-\infty$ for all $q=1,\dotsc,Q$ and~$\Cyc$ is completely unstable.
  \item[(b)] If all $\M^{(q)}$ satisfy conditions \cA--\cC, then~$\Cyc$ is f.a.s.\ and there exist $\beta^{(1)},\dotsc,\beta^{(s)} \in \R^3$ such that the stability indices for~$\Cyc$ are given by \[\si_q=\min\limits_{l=1,\dotsc,s}F^\ind\big(\beta^{(l)}\big).\]
    \end{itemize}
\end{prop}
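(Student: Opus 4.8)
The plan is to deduce the statement directly from Theorem~3.10 of Garrido-da-Silva and Castro~\cite{Garrido-da-Silva2016} by reducing the dynamics on the manifold~$\Mfld$ to the Euclidean setting in which their result is proved. The only point that genuinely requires attention is that their theorem is stated for heteroclinic cycles in~$\Rn$, whereas our~$\Cyc$ lives on an arbitrary smooth manifold; every other hypothesis (quasi-simplicity, rescaled-permutation global maps, the presence of a negative entry in some~$\M_q$) is assumed verbatim, so the work lies entirely in transferring the setup, after which the conclusions follow with no change.

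First I would fix, at each equilibrium~$\xi_q$, a local chart in which the tangent space~$\Th_q$ is split into the radial, contracting, expanding, and transverse subspaces determined by the eigenvectors of~$\ud X(\xi_q)$, as in the discussion preceding Definition~\ref{def:QuasiSimple}. Quasi-simplicity guarantees a one-dimensional contracting and a one-dimensional expanding direction at each~$\xi_q$, so in these coordinates the linearized local passage from an incoming to an outgoing cross-section is, to leading order, a product of powers of the local coordinates with exponents given by ratios of the relevant eigenvalues. Passing to logarithmic coordinates on the cross-sections turns each local passage map into an affine map; composing with the global maps---which by hypothesis are rescaled permutations of the coordinate axes---yields a Poincar\'e return map whose leading-order part is governed precisely by the transition matrix~$\M_q$. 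This is the construction of~\cite{Krupa1995, Garrido-da-Silva2016}, and it is insensitive to the ambient geometry: only the eigenvalue data at the~$\xi_q$ and the combinatorics of the global permutations enter~$\M_q$.

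With the return map in this normal form, the remainder of the argument is identical to~\cite{Garrido-da-Silva2016}. The product matrices~$\M^{(q)}$ all share the same spectrum, and their maximal eigenvalue governs the asymptotic behaviour of orbits in the cross-section. Conditions~\cA--\cC isolate the case in which the dominant eigenvalue~$\lmax$ is real and exceeds one and its eigenvector~$\wmax$ has entries of a common sign; this is exactly what is needed for a positive-measure set of nearby initial conditions to track the cycle. If some~$\M^{(q)}$ fails one of these conditions, no such set exists and one obtains $\si_q=-\infty$ for all~$q$, yielding complete instability, which is part~(a). When~\cA--\cC hold for every~$q$, the cycle is f.a.s., and the stability index along each connection is computed by partitioning the relevant cross-section into finitely many sectors on which the return map is asymptotically linear; each sector contributes a vector~$\beta^{(l)}\in\R^3$ assembled from the eigendata of the~$\M^{(q)}$, and the index is the worst (minimal) value of the index function~$F^\ind$ over these sectors, which is part~(b).

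The main obstacle is the reduction step rather than the conclusion: one must check that the nonlinear local passage maps on~$\Mfld$ differ from their logarithmic-linear leading order only by terms that are negligible for the measure-theoretic estimates underlying the stability index---equivalently, that $\si_q$ depends only on the linearization at the~$\xi_q$ and the rescaled-permutation structure of the global maps, and not on~$\Mfld$ or on higher-order features of the flow. Once this locality is established, the geometric measure estimates and the explicit form of~$F^\ind$ transfer without modification from~\cite{Garrido-da-Silva2016}, completing both parts.
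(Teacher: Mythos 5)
Your proposal is correct and matches the paper's own treatment: the paper likewise takes Proposition~\ref{thm:3.10} directly from Garrido-da-Silva and Castro, justifying the transfer to arbitrary manifolds by exactly the observation you make---that the transition matrices, and hence the stability indices, are determined solely by the linearization at the equilibria and the rescaled-permutation structure of the global maps, so no other global or ambient-geometric property of~$\Mfld$ enters the argument. The paper states this locality remark immediately after Definition~\ref{def:QuasiSimple} rather than writing out the cross-section and logarithmic-coordinate construction, but the underlying reasoning is the same as yours.
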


\noindent Here~$s$ is bounded by an expression which depends on the number of rows of the transition matrices (and their products) with at least one negative entry; cf.~\cite{Garrido-da-Silva2016} for details.

%%%%%%%%%
\subsection{Coupled populations of phase oscillators}

Consider~$\maxpop$ populations of~$\maxdim$ phase oscillators where $\theta_{\sigma,k}\in\Tor:=\R/2\pi \Z$ denotes the phase of oscillator~$k$ in population~$\sigma$. Hence, the state of the coupled oscillator populations is determined by $\theta=(\theta_{1}, \dotsc, \theta_{\maxpop})\in\Tormn$ where $\theta_\sigma = (\theta_{\sigma, 1}, \dotsc, \theta_{\sigma, \maxdim})\in\Torn$ is the state of population~$\sigma$.  Let~$\Sn$ denote the permutation group of~$\maxdim$ elements. Suppose that the phase evolution is given by
\begin{equation}\label{eq:DynMxN}
\dot\theta_{\sigma,k} := \frac{\ud}{\ud t}\theta_{\sigma,k} = \omega + Y_{\sigma,k}(\theta)
\end{equation}
where~$\omega$ is the intrinsic frequency of each oscillator and the vector field~$Y$ is $(\Sn\times\Tor)^\maxpop$-equivariant. Here, each copy of~$\Tor$ acts by shifting all oscillator phases of a given population~$\sigma$ by a common constant while~$\Sn$ permutes the oscillator indices~$k$.

The symmetry implies that certain phase configurations are dynamically invariant.
For a single population of $\maxdim$ oscillators, the subset
\begin{align}
\Sync &:= \set{(\phi_1, \dotsc, \phi_\maxdim)\in\Torn}{\phi_k=\phi_{k+1}}
\intertext{corresponds to phases being in full phase synchrony and}
\Splay &:= \set{(\phi_1, \dotsc, \phi_\maxdim)\in\Torn}{\phi_{k+1}=\phi_{k}+\frac{2\pi}{\maxdim}}
\end{align}
denotes a splay phase configuration---typically we call any element of the group orbit~$\Sn\Splay$ a splay phase. For interacting oscillator populations, we use the shorthand notation
\begin{subequations}\label{eq:SyncSplay}
\begin{align}
\theta_1\dotsb\theta_{\sigma-1}\Sp\theta_{\sigma+1}\dotsb\theta_{\maxpop} &= \lset{\theta\in\Tormn}{\theta_\sigma\in\Sync}\\
\theta_1\dotsb\theta_{\sigma-1}\Dp\theta_{\sigma+1}\dotsb\theta_{\maxpop} &= \lset{\theta\in\Tormn}{\theta_\sigma\in\Splay}
\end{align}
\end{subequations}
to indicate that population~$\sigma$ is fully phase synchronized or in splay phase. Consequently, $\Sp\dotsb\Sp$ ($\maxpop$~times) is the set of cluster states where all populations are fully phase synchronized and $\Dp\dotsb\Dp$ the set where all populations are in splay phase. Because of the~$\Sn^\maxpop$ symmetry, the sets~\eqref{eq:SyncSplay} are invariant~\cite{Ashwin1992}.

To reduce the phase-shift symmetry~$\Torm$ we may rewrite~\eqref{eq:DynMxN} in terms of phase differences $\psi_{\sigma,k} := \theta_{\sigma, k+1} - \theta_{\sigma, 1}$, $k=1, \dotsc, \maxdim-1$. Hence, with $\psi_\sigma\in\Tornmo$ we also write for example $\psi_1\Sync\dotsb\Sync$ (or simply $\psi\Sync\dotsb\Sync$ if the index is obvious) to indicate that all but the first population is phase synchronized. The sets~\eqref{eq:SyncSplay} are equilibria relative to~$\Torm$, that is, they are equilibria for the reduced system in terms of phase differences.

%%%%%%%%%
\subsubsection{Frequencies and localized frequency synchrony}
Suppose that $\maxpop > 1$ and let~$\theta:[0,\infty)\to\Tormn$ be a solution of~\eqref{eq:DynMxN} with initial condition $\theta(0)=\theta^0$. While $\dot\theta_{\sigma,k}(t)$ is the \emph{instantaneous angular frequency} of oscillator $(\sigma, k)$, define the \emph{asymptotic average angular frequency} of oscillator $(\sigma, k)$ by \begin{equation}
\Omega_{\sigma,k}(\theta^0):=\lim_{T\to\infty}\frac{1}{T}\int_0^T\dot\theta_{\sigma,k}(t)\udi t.
\end{equation}
Here we assume that these limits exist for all oscillators but this notion can be generalized to frequency intervals; see also~\cite{Bick2015c, Bick2015d}.

\begin{defn}
A connected flow-invariant invariant set $A\subset\Tormn$ has \emph{localized frequency synchrony} if for any~$\theta^0\in A$ and fixed~$\sigma$ we have $\Omega_{\sigma,k} = \Omega_{\sigma}$ for all~$k$ and there exist indices $\sigma\neq\tau$ such that
\begin{align}
\Omega_{\sigma} \neq \Omega_{\tau}.
\end{align}
\end{defn}

\begin{rem}
Note that a chain-recurrent set~$A$ with localized frequency synchrony is a \emph{weak chimera} as defined in~\cite{Ashwin2014a}.
\end{rem}

%%%%%%%%%%%%%%%%%%
\section{Three Coupled Oscillator Populations}
\label{sec:cycles}

Here we derive explicit stability results for the heteroclinic cycles in $\maxpop=3$ coupled populations of $\maxdim=2$ phase oscillators~\eqref{eq:DynMxN} considered in the first part~\cite{Bick2018a}; we use the same notation introduced there. Interactions between pairs of oscillators are mediated by the coupling function
\begin{equation}\label{eq:CN2}
g(\vth) = \sin(\vth+\alpha)-r\sin(a(\vth+\alpha)).
\end{equation}
With the interaction function
\begin{align}\label{eq:NPCN2}
\tGf(\theta_\tau; \vth) &=
-\frac{1}{4}\big(\cos(\theta_{\tau, 1}-\theta_{\tau,2}+\vth+\alpha)+\cos(\theta_{\tau, 2}-\theta_{\tau,1}+\vth+\alpha)\big),
\end{align}
the phase dynamics for coupling strength $K>0$ between populations are given by
\begin{align}\label{eq:Dyn3x2}
\begin{split}
\dot\theta_{\sigma,k} &= \omega+g(\theta_{\sigma,3-k}-\theta_{\sigma,k})
+\K\tGf(\theta_{\sigma-1}; \theta_{\sigma,3-k}-\theta_{\sigma,k})
\\&\qquad\qquad
-\K\tGf(\theta_{\sigma+1}; \theta_{\sigma,3-k}-\theta_{\sigma,k}),
\end{split}
\end{align}%
$\sigma\in\sset{1,2,3}$, $k\in\sset{1,2}$. These are the equations of motion\footnote{As shown in \cite{Bick2017c} and Appendix~\ref{app:Reduction} these equations arise from an approximation of coupled populations with state-dependent phase shift by nonpairwise coupling terms.} considered in the first part~\cite{Bick2018a} with phase shifts parametrized by $\al := \at = \af-\ph$.

The interactions between populations in~\eqref{eq:Dyn3x2}---which include nonpairwise coupling---are a special case of~\eqref{eq:DynMxN}. More precisely, with $\Zm:=\Z/\maxpop\Z$ the equations~\eqref{eq:Dyn3x2} are $(\Sn\times\Tor)^\maxpop\rtimes \Zm$-equivariant. Each copy of~$\Tor$ acts by shifting all oscillator phases of one population by a common constant while~$\Sn$ permutes its oscillators. The action of~$\Zm$ permutes the populations cyclically. These actions do not necessarily commute. The phase space of~\eqref{eq:Dyn3x2} is organized by invariant subspaces and there are relative equilibria $\DSS$, $\DDS$ and their images under the~$\Z_3$ action.

%%%%%%
\subsection{Heteroclinic cycles and local stability}
The coupled oscillator populations~\eqref{eq:Dyn3x2} with interaction functions~\eqref{eq:CN2}, \eqref{eq:NPCN2} support a robust heteroclinic cycle~\cite{Bick2018a}. Linear stability of~$\DSS$,~$\DDS$ are given by the eigenvalues%
\begin{subequations}%
\label{eq:StabDSS}%
\begin{align}
\lambda^\DSS_1 &= 2\cos(\al)+4r\cos(2\al),\\
\lambda^\DSS_2 &= 2\K\sin(\al)-2\cos(\al)+4r\cos(2\al),\\
\lambda^\DSS_3 &= -2\K\sin(\al)-2\cos(\al)+4r\cos(2\al),
\end{align}
\end{subequations}
and 
\begin{subequations}\label{eq:StabDDS}
\begin{align}
\lambda^\DDS_1 &= 2\K\sin(\al)+2\cos(\al)+4r\cos(2\al),\\
\lambda^\DDS_2 &= -2\K\sin(\al)+2\cos(\al)+4r\cos(2\al),\\
\lambda^\DDS_3 &= -2\cos(\al)+4r\cos(2\al).
\end{align}
\end{subequations}

\begin{lem}[Lemma~3.2 in~\cite{Bick2018a}]\label{lem:HetCycleM3}
Suppose that
$\lambda^\DSS_3<0<\lambda^\DSS_2$ and
$\lambda^\DDS_2<0<\lambda^\DDS_1$.
Then the $\maxpop=3$ coupled populations of $\maxdim=2$ phase oscillators~\eqref{eq:Dyn3x2} with interaction functions~\eqref{eq:CN2}, \eqref{eq:NPCN2} have a robust heteroclinic cycle
\begin{equation*}\label{eq:HetCycle}
\Cyc_2 = (\DSS, \DDS, \SDS, \SDD, \SSD, \DSD, \DSS).
\end{equation*}
\end{lem}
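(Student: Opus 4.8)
The plan is to pass to the reduced system in phase differences, exhibit each connection of $\Cyc_2$ inside a one-dimensional flow-invariant circle, and exploit the $(\Sn\times\Tor)^3\rtimes\Z_3$-equivariance both to generate these invariant subspaces and to reduce the whole verification to the two equilibria $\DSS$ and $\DDS$. First I would reduce the $\Tor^3$ phase-shift symmetry: with $\maxdim=2$ each population carries the single phase difference $\psi_\sigma=\theta_{\sigma,2}-\theta_{\sigma,1}\in\Tor$, so the reduced phase space is $\Tor^3$, with synchrony $\Sp$ and splay $\Dp$ corresponding to $\psi_\sigma=0$ and $\psi_\sigma=\pi$. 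The six relative equilibria are then six of the eight vertices of the cube $\{0,\pi\}^3$, omitting $\SSS$ and $\DDD$; they are hyperbolic wherever the listed eigenvalues are nonzero, and consecutive equilibria in $\Cyc_2$ differ in exactly one coordinate.

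Next I would produce the invariant subspaces. For each $\sigma$ the swap of the two oscillators in population $\sigma$ acts on $\Tor^3$ by the sign change $\psi_\sigma\mapsto-\psi_\sigma$, whose fixed-point set is $\{\psi_\sigma\in\{0,\pi\}\}$; freezing two populations at $\Sp$ or $\Dp$ therefore yields a one-dimensional invariant circle of the form $\Fix(\Sigma_q)$ for an isotropy subgroup $\Sigma_q\subset\G$. Since consecutive equilibria differ in a single coordinate, every connection of $\Cyc_2$ lies in such a circle—for instance $\HC{\DSS}{\DDS}$ sits in $\{\psi_1=\pi,\ \psi_3=0\}$ with only $\psi_2$ free. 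Moreover, because each vertex is fixed by the full swap group $\Sn^3$ and the three $\psi$-axes are its distinct nontrivial isotypic components, the linearization at every vertex is diagonal in the $\psi$-coordinates, the eigenvectors being the $\psi$-axes themselves, so each listed eigenvalue is exactly the growth rate along one $\psi$-direction.

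I would then read off the source–sink structure. At $\DSS$ the outgoing direction toward $\DDS$ is $\psi_2$, carrying $\lambda^\DSS_2>0$, while the incoming direction from $\DSD$ is $\psi_3$, carrying $\lambda^\DSS_3<0$; at $\DDS$ the outgoing direction toward $\SDS$ is $\psi_1$, carrying $\lambda^\DDS_1>0$, while the incoming direction from $\DSS$ is $\psi_2$, carrying $\lambda^\DDS_2<0$. Hence on each relevant circle the source endpoint repels and the target attracts, producing the heteroclinic connection. The $\Z_3$-action, cyclically permuting the populations, splits the vertices into the orbits $\{\DSS,\SDS,\SSD\}$ and $\{\DDS,\SDD,\DSD\}$; by equivariance the eigenvalue data at $\SDS,\SSD$ (resp.\ $\SDD,\DSD$) agree, after the induced permutation of the $\psi$-axes, with those at $\DSS$ (resp.\ $\DDS$), so the two hypotheses of the lemma deliver the source–sink structure along all six connections simultaneously. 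Robustness is then automatic: each $P_q=\Fix(\Sigma_q)$ is a fixed-point subspace, so the connections persist under $\G$-equivariant perturbations, and since consecutive connections share an endpoint the six of them close up into the cycle $\Cyc_2$.

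The hard part will be two pieces of bookkeeping. First, one must pin down which eigenvalue index is incoming, outgoing, or radial at $\DSS$ and $\DDS$, and check that the $\Z_3$-generator carries an expanding direction to an expanding direction, so that the two stated inequalities genuinely propagate around the whole cycle rather than getting scrambled. Second, and more delicately, one must confirm that within each invariant circle the prescribed vertices are the \emph{only} equilibria, so that the unstable manifold of the source limits on the target and not on a spurious intermediate equilibrium. This reduces to showing that the scalar reduced field $\dot\psi_\sigma=f(\psi_\sigma)$ for the switching population—with the two frozen populations entering as parameters through $g$ and $\tGf$—is odd in $\psi_\sigma$ and has no zero on $(0,\pi)$, which is precisely where the explicit coupling and the admissible ranges of $\al$, $r$, and $\K$ enter.
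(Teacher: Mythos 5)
Your skeleton is the right one, and it is essentially how this lemma is proved (note the present paper does not reprove it --- it cites Part~I): reduce to phase differences $\psi_\sigma\in\Tor$, identify each connection as lying in a one-dimensional fixed-point circle of the oscillator-swap symmetries, read off the contracting/expanding directions from the diagonal linearization at the vertices of $\{0,\pi\}^3$, propagate the eigenvalue data with the $\Z_3$ action, and get robustness from the fixed-point-subspace structure. However, the step you defer as ``the hard part'' --- that the scalar field on each connecting circle has no zero in $(0,\pi)$ --- is the actual content of the lemma, and your proposal both leaves it unproven and mischaracterizes what it requires. You write that this is where ``the admissible ranges of $\al$, $r$, and $\K$ enter,'' i.e.\ you expect additional parameter restrictions. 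If that were the case the lemma as stated would be false: its only hypotheses are the two eigenvalue inequalities, so the absence of interior equilibria must be derived from those inequalities alone. A blind proof that stops short of this derivation has not proved the statement.

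Here is the one observation that closes the gap. On the circle carrying $\HC{\DSS}{\DDS}$ (that is, $\psi_1=\pi$, $\psi_3=0$, $\psi_2$ free) the reduced dynamics are
\begin{equation*}
\dot\psi_2=\sin(\psi_2)\,h(\cos\psi_2),
\qquad
h(c)=-2\cos\al+4r\cos(2\al)\,c+2\K\sin\al,
\end{equation*}
and the endpoint values of $h$ are precisely the eigenvalues in the hypotheses: $h(1)=\lambda^\DSS_2>0$ and $h(-1)=-\lambda^\DDS_2>0$. Since $h$ is \emph{affine} in $c=\cos\psi_2$, positivity at $c=\pm1$ forces $h>0$ on all of $[-1,1]$, so $\dot\psi_2>0$ on $(0,\pi)$: there is no interior equilibrium and the unstable manifold of $\DSS$ in this circle limits on $\DDS$. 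Likewise, on the circle $\{\psi_2=\pi,\ \psi_3=0\}$ the bracket is $-2\cos\al+4r\cos(2\al)\cos\psi_1-2\K\sin\al$, whose endpoint values are $\lambda^\DSS_3<0$ and $-\lambda^\DDS_1<0$, giving monotone flow from $\DDS$ to $\SDS$; the $\Z_3$ symmetry then settles the remaining four connections. So the no-interior-zero property is an immediate consequence of the affine-in-$\cos\psi_\sigma$ structure of the coupling together with exactly the stated eigenvalue conditions --- no further assumptions are needed, or indeed permitted.
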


For fixed~$\alpha\approx\frac{\pi}{2}$, the assumptions of Lemma~\ref{lem:HetCycleM3} define a cone-shaped region in $(\K,r)$ parameter space: there is an affine linear function~$L$ such that $K>K_0$ where $L(K_0)=0$ and~$r$ between~$-L(K)$ and~$L(K)$. For the remainder of this section, we assume that the assumptions of Lemma~\ref{lem:HetCycleM3} hold. 

\begin{lem}
The cycle $\Cyc_2$ is quasi-simple.
\end{lem}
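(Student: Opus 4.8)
The plan is to work in the reduced phase space $\Mfld$ obtained from $\Tor^{\maxpop\maxdim}=\Tor^{6}$ by quotienting out the $\Tor^{\maxpop}$ phase-shift symmetry, exactly as in the first part. Since $\maxdim=2$, each population contributes a single phase difference $\psi_\sigma=\theta_{\sigma,2}-\theta_{\sigma,1}\in\Tor$, so $\Mfld$ is three-dimensional with coordinates $(\psi_1,\psi_2,\psi_3)$; the six equilibria of $\Cyc_2$ sit at the corners where each $\psi_\sigma\in\{0,\pi\}$ (with $0$ marking $\Sp$ and $\pi$ marking $\Dp$). First I would record which flow-invariant subspace carries each connection. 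Each step of the cycle changes exactly one coordinate between $0$ and $\pi$ while freezing the other two in their $\Sp$/$\Dp$ states; since the one-population synchrony and splay sets in~\eqref{eq:SyncSplay} are flow-invariant, intersecting the invariance conditions for the two frozen populations shows that the six connections lie, in order, in the one-dimensional invariant lines
\[
\DpS,\; \pDS,\; \SDp,\; \SpD,\; \pSD,\; \DSp,
\]
which I take to be the subspaces $P_1,\dotsc,P_6$ realizing the robust cycle of Lemma~\ref{lem:HetCycleM3}.

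Next I would read off the tangent data. As $\dim\Mfld=3$, the tangent space $\Th_q$ at each equilibrium is spanned by $\partial_{\psi_1},\partial_{\psi_2},\partial_{\psi_3}$, and because every $P_q$ is a coordinate line, $\Ph_q^{+}=\Tg_{\xi_q}P_q$ is the span of the coordinate direction $\partial_{\psi_\sigma}$ of the \emph{outgoing} transition at $\xi_q$, while $\Ph_q^{-}=\Tg_{\xi_q}P_{q-1}$ is the span of the coordinate direction of the \emph{incoming} transition. The decisive observation is combinatorial: scanning $\Cyc_2=(\DSS,\DDS,\SDS,\SDD,\SSD,\DSD)$, at every equilibrium the incoming and outgoing connections move two \emph{different} populations. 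For instance at $\DSS$ the connection arriving from $\DSD$ moves population $3$ whereas the connection leaving towards $\DDS$ moves population $2$; the analogous check at $\DDS,\SDS,\SDD,\SSD,\DSD$ always returns a mismatched pair. Hence $\Ph_q^{-}$ and $\Ph_q^{+}$ are spanned by two distinct coordinate vectors for every $q$.

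From this the conclusion is immediate: $\Lh_q=\Ph_q^{-}\cap\Ph_q^{+}=\{0\}$, so $\Ph_q^{-}\ominus\Lh_q=\Ph_q^{-}$ and $\Ph_q^{+}\ominus\Lh_q=\Ph_q^{+}$ are each one-dimensional, which is precisely the quasi-simplicity condition of Definition~\ref{def:QuasiSimple} for all $q$. The unique contracting and expanding directions are thus the incoming and outgoing coordinate axes, and the remaining third coordinate axis is the single transverse direction at each equilibrium; this is consistent with the fact that the hypotheses of Lemma~\ref{lem:HetCycleM3} constrain exactly one pair of eigenvalues at $\DSS$ and at $\DDS$ (the contracting/expanding pair along the two moving coordinates) while leaving the third free. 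I expect the only real work to be the bookkeeping in the first step—correctly matching each connection to its invariant line and confirming flow-invariance of the frozen $\Sp$/$\Dp$ values—after which the dimension count needs no computation with the vector field itself.
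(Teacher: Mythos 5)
Your proof is correct and takes essentially the same approach as the paper: both arguments identify the one-dimensional invariant subspaces $\DpS,\pDS,\SDp,\SpD,\pSD,\DSp$ carrying the connections and observe that the incoming and outgoing lines at each equilibrium intersect only in that equilibrium, so $\Lh_q=\{0\}$ (no radial eigenvalues) and the contracting and expanding directions are each unique. The only cosmetic difference is that the paper invokes the symmetry permuting populations to reduce the check to $\DSS$ and $\DDS$, whereas you verify the combinatorial mismatch of incoming and outgoing coordinate directions at all six equilibria directly.
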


\begin{proof}
It suffices to consider the equilibria~$\DSS$ and~$\DDS$ due to the symmetry which permutes populations. We have $\Wu(\DSS)\subset\DpS$, $\Wu(\DDS)\subset\pDS$ which implies that each saddle has one contracting, expanding, and transverse eigenvalue; there are no radial eigenvalues since $\DSp\cap\DpS = \DSS$ and $\DpS\cap\pDS = \DDS$.
\end{proof}

Subject to nonresonance conditions, we may linearize the flow around the equilibria; see also~\cite[Proposition~4.1]{Aguiar2010a}.

\begin{lem}\label{lem:Linearizability}
Suppose that $\lambda^\DSS_1, \lambda^\DDS_3\neq 0$, and
\begin{subequations}\label{eq:CondNonRes}
\begin{align}
0 &\neq 2r\cos(2\al)\pm 3\cos(\al),\\
0 &\neq 4\K\sin(\al)\pm 4r\cos(2\al)\pm 2\cos(\al)
\end{align}
\end{subequations}
(in the second line we allow any combination of $+$ and $-$). Then we can linearize the flow at the equilibria in~$\Cyc_2$. For $\al=\ph$, these conditions reduce to $r\neq 0$ and $r\neq\pm\K$.
\end{lem}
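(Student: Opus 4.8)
The plan is to verify the nonresonance conditions needed to apply the linearization theorem (the analogue of Sternberg's theorem for flows near hyperbolic equilibria, invoked via \cite{Aguiar2010a}). The key point is that local linearizability fails precisely when there are resonances among the eigenvalues of the Jacobian at an equilibrium, so the conditions \eqref{eq:CondNonRes} should be exactly the statements that certain integer-combination resonance relations among the eigenvalues in \eqref{eq:StabDSS} and \eqref{eq:StabDDS} do \emph{not} hold. By the symmetry that permutes populations cyclically, it again suffices to treat the two representative equilibria $\DSS$ and $\DDS$; the remaining four equilibria in $\Cyc_2$ are images of these under the $\Z_3$ action and have conjugate spectra.

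First I would record the relevant eigenvalues. At $\DSS$ the eigenvalues are $\lambda^\DSS_1,\lambda^\DSS_2,\lambda^\DSS_3$ from \eqref{eq:StabDSS}, and at $\DDS$ they are $\lambda^\DDS_1,\lambda^\DDS_2,\lambda^\DDS_3$ from \eqref{eq:StabDDS}. Under the standing assumptions of Lemma~\ref{lem:HetCycleM3} the signs are fixed: at $\DSS$ we have one contracting eigenvalue $\lambda^\DSS_3<0$, one expanding eigenvalue $\lambda^\DSS_2>0$, and the transverse/radial eigenvalue $\lambda^\DSS_1$; similarly at $\DDS$. The hypothesis $\lambda^\DSS_1,\lambda^\DDS_3\neq 0$ ensures hyperbolicity (no zero eigenvalue), which is the baseline requirement. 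What remains is to rule out resonances of the form $\lambda_i = \sum_j m_j \lambda_j$ with $m_j\in\Z_{\geq 0}$, $\sum_j m_j\geq 2$, among the three eigenvalues at each equilibrium.

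Next I would substitute the explicit expressions and collect terms. Writing $c:=\cos(\al)$, $C:=\cos(2\al)$, $s:=\sin(\al)$, the eigenvalues at $\DSS$ are $2c+4rC$, $2\K s-2c+4rC$, $-2\K s-2c+4rC$, and at $\DDS$ they are $2\K s+2c+4rC$, $-2\K s+2c+4rC$, $-2c+4rC$. The plan is to show that the finitely many low-order resonance relations that could occur given the fixed sign pattern all reduce, after cancellation, to one of the expressions in \eqref{eq:CondNonRes} being zero. For instance, a relation like $\lambda^\DSS_2 = \lambda^\DSS_1 + \lambda^\DSS_3$ or $\lambda^\DSS_1 = 2\lambda^\DSS_3$ will, upon substitution, collapse to a statement of the shape $2rC\pm 3c=0$ or $4\K s \pm 4rC \pm 2c = 0$, matching \eqref{eq:CondNonRes}. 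I would enumerate the candidate resonances systematically (only sums of the contracting eigenvalues can resonate with the expanding one for an attracting resonance, and vice versa, which bounds the combinations to check) and verify each collapses into the listed conditions. Finally, setting $\al=\ph$ gives $c=0$, $C=\cos(\pi)=-1$, $s=1$, so the first condition becomes $-2r\neq 0$, i.e.\ $r\neq 0$, and the second becomes $4\K\mp 4r\neq 0$, i.e.\ $r\neq\pm\K$, recovering the stated reduction.

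The main obstacle will be the bookkeeping in the enumeration step: one must argue that \emph{only} the resonance relations corresponding to \eqref{eq:CondNonRes} can arise, i.e.\ that higher-order resonances and the remaining sign-incompatible combinations are automatically excluded by the hyperbolicity and sign constraints. The cleanest way to handle this is to invoke the structure of the linearization theorem in \cite{Aguiar2010a}, which only requires finitely many resonance conditions to be avoided for the relevant $C^1$ (or finitely differentiable) linearization along the cycle, and then to match that finite list against \eqref{eq:CondNonRes}. Once the correspondence between the abstract resonance list and the two displayed equations is established, the proof is a direct substitution; the conclusion for $\al=\ph$ follows by the elementary evaluation above.
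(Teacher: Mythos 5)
Your overall frame---nonresonance conditions for a linearization theorem, substitution of the explicit eigenvalues, evaluation at $\al=\ph$---is the same as the paper's, but there is a genuine gap in the middle step: you are checking the wrong resonance set. The paper's proof invokes the $C^1$ linearization theorem (Ruelle), whose hypothesis is only that $\Re\lambda_l \neq \Re\lambda_j + \Re\lambda_k$ whenever $\Re\lambda_j<0<\Re\lambda_k$, i.e.\ resonances given by the sum of \emph{one} contracting and \emph{one} expanding eigenvalue. With that statement in hand, the proof is a six-line check: at $\DSS$ the pair $(\lambda^\DSS_3,\lambda^\DSS_2)$ gives $\lambda^\DSS_1\neq\lambda^\DSS_2+\lambda^\DSS_3$, i.e.\ $2r\cos(2\al)-3\cos(\al)\neq 0$; the pairs involving the transverse eigenvalue $\lambda^\DSS_1$ (whichever sign it has) give $4\K\sin(\al)\pm4r\cos(2\al)\pm2\cos(\al)\neq0$ with matched signs; and the analogous computation at $\DDS$ produces the remaining sign combinations. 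This is exactly why~\eqref{eq:CondNonRes} consists of precisely two families of conditions, and why the list is finite.

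Your plan instead enumerates Sternberg-type resonances $\lambda_i=\sum_j m_j\lambda_j$, $m_j\in\Z_{\geq0}$, $\sum_j m_j\geq 2$. That list is the wrong one for this lemma, and the conditions~\eqref{eq:CondNonRes} do \emph{not} exclude it. Concretely, your own example $\lambda^\DSS_1=2\lambda^\DSS_3$ reduces to
\begin{equation*}
4\K\sin(\al)-4r\cos(2\al)+6\cos(\al)=0,
\end{equation*}
whose coefficient $6$ on $\cos(\al)$ matches neither line of~\eqref{eq:CondNonRes}; so the claimed ``collapse'' fails for that relation. (It is also not needed: $2\lambda^\DSS_3=\lambda^\DSS_3+\lambda^\DSS_3$ is a same-sign resonance, which obstructs smooth but not $C^1$ linearization.) Moreover, with a mixed-sign spectrum the Sternberg list is infinite, so your ``bookkeeping'' step cannot terminate by checking finitely many low-order relations against two displayed equations. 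You gesture at the correct repair in your final paragraph---restrict to the finitely many conditions of the $C^1$ theorem---but you never state what those conditions are, and that statement ($\Re\lambda_l\neq\Re\lambda_j+\Re\lambda_k$ only for $\Re\lambda_j<0<\Re\lambda_k$) is the entire content of the paper's proof; without it your enumeration both includes relations that the hypotheses do not exclude and omits the reason the remaining ones need not be checked.
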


\begin{proof}
According to the~$C^1$ linearization theorem~\cite{Ruelle1989} we can linearize the flow if the eigenvalues~$\lambda_l$ of the linearization satisfy $\Re{\lambda_l} \neq \Re{\lambda_j} + \Re\lambda_k$ when $\Re{\lambda_j}<0<\Re{\lambda_k}$. Given~\eqref{eq:StabDSS}, conditions~\eqref{eq:CondNonRes} are just these nonresonance conditions. Plugging in $\al=\ph$ yields the second assertion.
\end{proof}

%%%%%%
\subsection{Cross sections, transition matrices, and stability}
\label{sec:Stability}

Using standard notation, we write
\begin{align}
-c_\DSS &:= \lambda^\DSS_3, & e_\DSS &:= \lambda^\DSS_2, & t_\DSS &:= \lambda^\DSS_1,\\
-c_\DDS &:= \lambda^\DDS_2, & e_\DDS &:= \lambda^\DDS_1, & t_\DDS &:= \lambda^\DDS_3,
\end{align}
for the contracting, expanding, and transverse eigenvalues. Thus $e_q, c_q>0$, $q\in\sset{\DSS, \DDS}$. The ratios between contraction/transverse stability and expansion are given by
\begin{align}
a_q &:= \frac{c_q}{e_q}, & b_q &:= -\frac{t_q}{e_q}
\end{align}
for $q\in\sset{\DSS, \DDS}$; we have~$a_q>0$ by definition and $b_q>0$ if $t_q<0$.

%%%%%%
\subsubsection{Poincar\'e map and transition matrices}
We first consider the linearized flow at the equilibria to calculate the local transition maps. Introduce local coordinates $(v,w,z)$ which correspond to the contracting, expanding, and transverse directions, respectively. After appropriate rescaling,  consider the cross sections
\begin{align}
\Hin_q &= \set{(v,w,z)}{\abs{v} = 1, \abs{w}\leq 1, \abs{z}\leq 1},\\
\Hout_q &= \set{(v,w,z)}{\abs{v}\leq 1, \abs{w} = 1, \abs{z}\leq 1}
\end{align}
at $q\in\sset{\DSS, \DDS}$. The linearized flow at~$\xi_q$ is
\[
\Phi^\tau_q(v, w, z) = \left(\exp(-c_q\tau)v, \exp(e_q\tau)w, \exp(t_q\tau)z\right).
\]
Hence the time of flight is $\tau=-\log(w)/e_q$ which implies that the local map at~$\xi_q$ is
\[
\hloc_q:\Hin_q\to\Hout_q, (\pm1,w,z)\mapsto 
(w^{a_q}, \pm1, w^{b_q}z)
\]

Considering the invariant subspaces, we see that the global maps are rescaled permutations. More specifically, we have 
\[\hglob_q:\Hout_q\to\Hin_{q+1}, (v,w,z)\mapsto(A_q w,B_q z,D_q v).\]

Write $\h_q := \hglob_q\circ\hloc_q: \Hin_q\to\Hin_{q+1}$. Ignoring~$v$, this yields a map between the incoming 2-dimensional sections of subsequent equilibria
\[
\h_q(w,z) = (B_q w^{b_q}z,D_q w^{a_q}).
\]
Taken together, the Poincar\'e return map for the linearized dynamics around the heteroclinic cycle (modulo the~$\Z_3$ group action) is
\[\h = \h_\DDS \circ \h_\DSS.\]

If we introduce logarithmic coordinates we can write the return map in terms of transition matrices~\cite{Field1991, Garrido-da-Silva2016}. Restrict to the $(w,z)$ coordinates and introduce logarithmic variables 
$\eta = \log(w)$, $\zeta = \log(z)$.
In the new variables, the maps~$\h_q$ become linear,
\begin{equation}
\hat\h_q(\eta, \zeta) = \M_q\ve{\eta}{\zeta} + \ve{\log B_q}{\log D_q}
\end{equation}
with
\begin{align}\label{2x2matrix}
\M_q &= \left(\begin{array}{cc}b_q & 1 \\a_q & 0\end{array}\right)
\end{align}
Note that these transition matrices are the same as the ones for simple cycles in~$\R^4$ of type~$C$~\cite{Podvigina2011}.

The transition matrix for the Poincar\'e map~$\h$ is $\M_\DDS \M_\DSS$. These transition matrices govern the stability of the cycle~\cite[Theorem~3.4]{Garrido-da-Silva2016}. 

%%%%%%%
\subsubsection{Stability of~$\Cyc_2$ for $\alpha=\frac{\pi}{2}$}
The stability properties at the saddles are symmetric and stability is governed by the properties of the transition matrix
\begin{align}
\M &= \left(\begin{array}{cc}b & 1 \\a & 0\end{array}\right).
\end{align}
Here we omitted the saddle index~$q$ since $\M_\DSS=\M_\DDS=\M$. This is the same transition matrix as for a simple heteroclinic cycle in~$\R^4$ of type~$C_1^-$~\cite{Podvigina2011}. 

\begin{lem}[{\cite[Section~4.2.2]{Podvigina2011}}]
\label{lem:StabC1m}
A heteroclinic cycle whose stability is given by the transition matrix~$\M$ is asymptotically stable if $b \geq 0$ (that is, $t \leq 0$) and $a+b>1$; otherwise it is completely unstable.
\end{lem}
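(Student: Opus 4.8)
The plan is to apply the two dichotomy results already recorded as Proposition~\ref{thm:3.4} and Proposition~\ref{thm:3.10} to the single transition matrix~$\M$, splitting into the cases $b\geq 0$ and $b<0$ according to whether~$\M$ has a negative entry. Since both saddles~$\DSS$,~$\DDS$ carry the same matrix, the relevant return-map product is $\M^{(q)}=\M_\DDS\M_\DSS=\M^2$ for every~$q$, and all the~$\M^{(q)}$ coincide. First I would record the spectral data of~$\M$: its characteristic polynomial is $\lambda^2-b\lambda-a$, so its eigenvalues are $\lambda_\pm=\tfrac{1}{2}\big(b\pm\sqrt{b^2+4a}\big)$, which are real because $a>0$, with $\lambda_+>0>\lambda_-$ and $\abs{\lambda_+}-\abs{\lambda_-}=b$. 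The eigenvalues of~$\M^2$ are then $\lambda_\pm^2$, sharing eigenvectors with~$\M$.

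In the case $b\geq 0$ every entry of~$\M$ is nonnegative (recall $a>0$), so Proposition~\ref{thm:3.4} applies. Here $\abs{\lambda_+}\geq\abs{\lambda_-}$, so the dominant eigenvalue of~$\M^2$ is~$\lambda_+^2$. The key step is the equivalence $\lambda_+>1\iff a+b>1$: rewriting $\lambda_+>1$ as $\sqrt{b^2+4a}>2-b$ (immediate when $b\geq 2$, and reducing to $a>1-b$ after squaring otherwise) yields exactly $a+b>1$. Hence $\abs{\lmax(\M^2)}=\lambda_+^2>1$ precisely when $a+b>1$, and Proposition~\ref{thm:3.4}(i) gives asymptotic stability in that regime, while $a+b\leq 1$ gives $\abs{\lmax}\leq 1$ and Proposition~\ref{thm:3.4}(ii) gives complete instability.

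In the case $b<0$ the $(1,1)$-entry of~$\M$ is negative, so I would instead invoke Proposition~\ref{thm:3.10}. Now the inequality reverses to $\abs{\lambda_-}>\abs{\lambda_+}$, so the dominant eigenvalue of~$\M^2$ is~$\lambda_-^2$, whose eigenvector is the $\lambda_-$-eigenvector of~$\M$. Solving $(\M-\lambda_- I)\wmax=0$ gives the component ratio $\wmax_2/\wmax_1=a/\lambda_-<0$, so the two entries of~$\wmax$ have opposite signs and condition~\cC\ fails. By Proposition~\ref{thm:3.10}(a) this forces $\si_q=-\infty$ for all~$q$, i.e.\ complete instability. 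Combined with the previous paragraph, this establishes the stated dichotomy, since ``$b\geq 0$ and $a+b>1$'' is exactly the stability region and its complement splits into the two instability subcases handled above.

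I expect the only real subtlety to be the bookkeeping of \emph{which} squared eigenvalue dominates in each sign regime of~$b$, and correspondingly checking condition~\cC\ against the correct eigenvector when $b<0$; once that is pinned down, the remaining steps are routine $2\times2$ algebra. A minor point worth stating explicitly is that, because condition~\cC\ already fails for $b<0$, the finite-index alternative of Proposition~\ref{thm:3.10}(b) never arises, which is what makes the conclusion a clean asymptotic-stability/complete-instability dichotomy rather than an intermediate f.a.s.\ statement.
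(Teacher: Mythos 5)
Your proposal is correct, but it necessarily differs from the paper, because the paper does not prove Lemma~\ref{lem:StabC1m} at all: it simply quotes the stability conditions for simple cycles of type $C_1^-$ in $\R^4$ from Podvigina and Ashwin~\cite{Podvigina2011}, after observing that the transition matrices coincide. What you have done instead is rederive that quoted result from the quasi-simple machinery the paper records as Propositions~\ref{thm:3.4} and~\ref{thm:3.10}, and the derivation checks out: the eigenvalues of $\M$ are $\lambda_\pm=\tfrac12\bigl(b\pm\sqrt{b^2+4a}\bigr)$ with $\lambda_+>0>\lambda_-$ and $\abs{\lambda_+}-\abs{\lambda_-}=b$; for $b\geq 0$ all entries are nonnegative, the dominant eigenvalue of $\M^{(q)}=\M^2$ is $\lambda_+^2$, and the equivalence $\lambda_+>1\iff a+b>1$ (most quickly seen from the characteristic polynomial $p(\lambda)=\lambda^2-b\lambda-a$ satisfying $p(1)=1-a-b$) turns Proposition~\ref{thm:3.4} into exactly the stated dichotomy; for $b<0$ the dominant eigenvector of $\M^2$ is the $\lambda_-$-eigenvector of $\M$, whose component ratio $a/\lambda_-<0$ violates condition~\cC, so Proposition~\ref{thm:3.10}(a) forces complete instability. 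This is in fact the same style of argument the paper carries out later, in Section~\ref{sec:ev}, for the product matrix of the $\alpha\neq\tfrac{\pi}{2}$ case (there used to connect Lemma~\ref{lem:StabC2m} with conditions \cA--\cC). So your route buys a self-contained proof within the paper's own framework and makes transparent why the boundary cases ($a+b=1$, or $b<0$ with any $a$) fall on the unstable side, whereas the paper's citation buys brevity and defers to the original four-dimensional $C_1^-$ analysis. The only point worth flagging is cosmetic: in the degenerate case $b=0$ the matrix $\M^2=aI$ no longer has distinct eigenvalues, so the phrase ``sharing eigenvectors with $\M$'' needs the caveat that one then argues directly with $\lmax(\M^2)=a$; the conclusion is unaffected.
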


In terms of the oscillator coupling parameters we can now show that the heteroclinic cycle loses stability completely in a (degenerate) transverse bifurcation at $r=0$ as both transverse eigenvalues pass through zero.

\begin{thm}\label{thm:StabPiH}
For $\alpha=\ph$ the heteroclinic cycle~$\Cyc_2$ is asymptotically stable if~$r>0$ and completely unstable if $r<0$.
\end{thm}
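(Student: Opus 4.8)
The plan is to evaluate the contracting, expanding, and transverse eigenvalues at $\al=\ph$, read off the entries $a$ and $b$ of the transition matrix~$\M$, and then invoke the dichotomy of Lemma~\ref{lem:StabC1m}. Since by assumption the hypotheses of Lemma~\ref{lem:HetCycleM3} hold, the cycle exists and is quasi-simple, and (away from $r=0$) Lemma~\ref{lem:Linearizability} guarantees that the linearization is valid, so the transition-matrix description applies throughout the region $r\neq 0$.

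First I would substitute $\cos(\al)=0$, $\sin(\al)=1$, $\cos(2\al)=-1$ (all valid at $\al=\ph$) into~\eqref{eq:StabDSS} and~\eqref{eq:StabDDS}. This makes the two saddles symmetric, yielding
\[
c_\DSS = c_\DDS = 2\K+4r,\qquad e_\DSS = e_\DDS = 2\K-4r,\qquad t_\DSS = t_\DDS = -4r,
\]
consistent with there being a single common transition matrix~$\M$. The standing assumptions $\lambda^\DSS_3<0<\lambda^\DSS_2$ and $\lambda^\DDS_2<0<\lambda^\DDS_1$ reduce at $\al=\ph$ to $\K+2r>0$ and $\K-2r>0$; in particular the expansion rate $e_q=2\K-4r$ is strictly positive, which is the key structural fact exploited below.

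Next I would compute the matrix entries
\[
a = \frac{c_q}{e_q} = \frac{\K+2r}{\K-2r},\qquad b = -\frac{t_q}{e_q} = \frac{2r}{\K-2r}.
\]
Because the denominator $\K-2r$ is positive, the sign of~$b$ coincides with the sign of~$r$, so the first condition $b\geq 0$ of Lemma~\ref{lem:StabC1m} holds precisely when $r\geq 0$. For the second condition I would compute $a+b=(\K+4r)/(\K-2r)$ and note that, again using $\K-2r>0$, the inequality $a+b>1$ is equivalent to $\K+4r>\K-2r$, that is, to $r>0$.

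The conclusion then follows directly from the dichotomy. If $r>0$ both conditions $b\geq 0$ and $a+b>1$ are satisfied, so $\Cyc_2$ is asymptotically stable; if $r<0$ the condition $b\geq 0$ already fails, so $\Cyc_2$ is completely unstable. There is no genuine analytic obstacle here: the only delicate point is bookkeeping---correctly identifying which eigenvalue is contracting, expanding, and transverse at each saddle, and checking that the positivity of~$e_q$ (guaranteed by the hypotheses of Lemma~\ref{lem:HetCycleM3}) collapses both stability conditions to the single sign condition on~$r$. The excluded value $r=0$ is exactly the nonresonance exclusion of Lemma~\ref{lem:Linearizability}, where both transverse eigenvalues vanish and the degenerate transverse bifurcation occurs.
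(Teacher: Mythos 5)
Your proposal is correct and follows essentially the same route as the paper: substitute $\alpha=\ph$ into the eigenvalues~\eqref{eq:StabDSS}, \eqref{eq:StabDDS} to get $a=\frac{2\K+4r}{2\K-4r}$, $b=\frac{4r}{2\K-4r}$, and apply the dichotomy of Lemma~\ref{lem:StabC1m}, with the existence hypotheses of Lemma~\ref{lem:HetCycleM3} guaranteeing $\K-2r>0$ so that both conditions $b\geq 0$ and $a+b>1$ collapse to the sign of~$r$. Your additional remarks on linearizability and the identification of $r=0$ with the degenerate transverse bifurcation are consistent with the paper's surrounding discussion and fill in details the paper leaves implicit.
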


\begin{proof}
Substituting the stability properties~\eqref{eq:StabDSS}, \eqref{eq:StabDDS}, we obtain
\begin{align}
a &= \frac{c}{e} = \frac{2\K+4r}{2\K-4r} & b &= -\frac{t}{e} = \frac{4r}{2\K-4r}.
\end{align}
Simplifying the expressions $b \geq 0$ and $a + b > 1$ now proves the assertion.
\end{proof}

\subsubsection{Stability for $\alpha\neq\frac{\pi}{2}$}
If~$\alpha\neq\frac{\pi}{2}$, then the two transverse eigenvalues
$t_1 = 2\cos(\al)+4r\cos(2\al)$,
$t_2 = -2\cos(\al)+4r\cos(2\al)$
are distinct. Consequently, there are two transverse bifurcations as the oscillator coupling parameters are varied: we have
\begin{itemize}
\item $t_1, t_2 < 0$ if $r>\frac{\abs{\cos(\al)}}{4\cos^2(\al)-2}$, 
\item $t_1, t_2 > 0$ if $r<\frac{-\abs{\cos(\al)}}{4\cos^2(\al)-2}$, and 
\item one positive and one negative transverse eigenvalue otherwise.
\end{itemize}

The stability of the heteroclinic cycle is now determined by the properties of the transition matrix
\begin{align}\label{trans-matrix}
\M &= \M_\DDS \M_\DSS=\left(\begin{array}{cc}b_1b_2+a_2 & b_1 \\a_1b_2 & a_1\end{array}\right).
\end{align}
The stability calculations are analogous to those for simple heteroclinic cycles in~$\R^4$ of type~$C_2^-$~\cite{Podvigina2011, Lohse2015}; for such cycles, we have the following result.

\begin{lem}[Stability conditions for $C_2^-$ cycles given in~\cite{Podvigina2011, Lohse2015}]
\label{lem:StabC2m}
For asymptotic stability, we need $t_1, t_2 < 0$ and \[\max\sset{b_1b_2 + a_2 + a_1, 2(b_1b_2 + a_2 + a_1-a_1a_2)} > 2.\] 
If $t_2<0<t_1$ and $b_1b_2-a_1+a_2<0$, then the cycle is completely unstable.
\end{lem}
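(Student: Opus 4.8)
The plan is to deduce the stability conditions for the matrix $\M$ in~\eqref{trans-matrix} by applying the general stability criteria for $C_2^-$ cycles already established in~\cite{Podvigina2011, Lohse2015}, and then to reconcile the two stated assertions (asymptotic stability and complete instability) with the sign structure of the eigenvalues. Since the lemma is explicitly a restatement of known results for simple $C_2^-$ cycles, the central task is to verify that the transition matrix~\eqref{trans-matrix} arising from our oscillator system indeed matches the $C_2^-$ form, which was already noted in the text preceding the statement; the actual content of the proof is then the algebraic translation of the abstract conditions into the quantities $a_1, a_2, b_1, b_2$ and the transverse eigenvalues $t_1, t_2$.

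First I would recall that for a simple cycle of type $C_2^-$ in $\R^4$, asymptotic stability is governed by the maximal eigenvalue $\lmax$ of the transition matrix $\M$ satisfying $\lmax > 1$ together with the positivity condition~\cC\ on the corresponding eigenvector (cf.\ Proposition~\ref{thm:3.4} and the conditions \cA--\cC). The requirement $t_1, t_2 < 0$ ensures that both $b_1, b_2 > 0$, so that all entries of $\M$ in~\eqref{trans-matrix} are nonnegative; by Proposition~\ref{thm:3.4} we are then in the dichotomy between asymptotic stability and complete instability, and asymptotic stability holds precisely when $\abs{\lmax} > 1$. The next step is to compute the characteristic polynomial of the $2\times 2$ matrix~\eqref{trans-matrix}, namely $\lambda^2 - (b_1b_2 + a_2 + a_1)\lambda + a_1a_2 = 0$ (using $\det \M = a_1 a_2$ and $\operatorname{tr}\M = b_1b_2 + a_2 + a_1$), and to express the condition $\lmax > 1$ in terms of the trace and determinant. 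The standard manipulation shows that $\lmax > 1$ is equivalent to requiring either that the trace exceeds $1 + \det \M$, or that a secondary condition on $2(\operatorname{tr}\M - \det\M)$ holds; spelling this out reproduces exactly the stated inequality $\max\sset{b_1b_2 + a_2 + a_1,\, 2(b_1b_2 + a_2 + a_1 - a_1a_2)} > 2$.

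For the complete-instability assertion, I would use Proposition~\ref{thm:3.10}(a): when $t_2 < 0 < t_1$, the matrix $\M_\DSS$ (or $\M_\DDS$) has a negative entry, so we are in the regime where conditions \cA--\cC\ must be checked. The claim is that the additional inequality $b_1 b_2 - a_1 + a_2 < 0$ forces one of these conditions to fail for $\M^{(q)}$—most naturally the eigenvector-positivity condition~\cC\ or the reality/magnitude conditions \cA, \cB—so that $\si_q = -\infty$ for all $q$ and the cycle is completely unstable. Concretely, I expect $b_1 b_2 - a_1 + a_2 < 0$ to be the sign condition that places $\lmax \le 1$ or makes the dominant eigenvector change sign, and tracing through which of \cA--\cC\ is violated is the substance here.

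The main obstacle I anticipate is the bookkeeping in passing between the two different transition matrices: the product $\M = \M_\DDS \M_\DSS$ used for asymptotic stability versus the individual factors $\M_q$ carrying the negative transverse entries relevant for the instability criterion. One must be careful that the eigenvalues of $\M^{(q)}$ are independent of $q$ (as stated in the preamble to Proposition~\ref{thm:3.4}), so that evaluating the conditions on the single product $\M$ in~\eqref{trans-matrix} is legitimate, while the \emph{sign} of the entries—which determines whether Proposition~\ref{thm:3.4} or Proposition~\ref{thm:3.10} applies—must be read off from the factors before multiplication. Since the statement is cited verbatim from~\cite{Podvigina2011, Lohse2015}, the cleanest proof simply identifies~\eqref{trans-matrix} with the $C_2^-$ transition matrix in those references and invokes their results; the only genuine verification is that the entries $a_i, b_i$ defined from~\eqref{eq:StabDSS},~\eqref{eq:StabDDS} play the roles demanded there and that $t_1, t_2$ are the transverse eigenvalues controlling the sign regime.
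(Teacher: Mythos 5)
Your proposal is correct and takes essentially the same route as the paper: the lemma itself is quoted from the cited references, and the paper's own verification in Section~\ref{sec:ev} proceeds exactly as you outline, applying Proposition~\ref{thm:3.4} in the all-nonnegative regime (where the trace--determinant computation for~$\M$ gives $\lmax>1$ if and only if $\max\sset{b_1b_2+a_2+a_1,\,2(b_1b_2+a_2+a_1-a_1a_2)}>2$) and Proposition~\ref{thm:3.10}(a) in the mixed-sign regime. In particular, your expectation that $b_1b_2-a_1+a_2<0$ forces the dominant eigenvector of the transition matrix product to have components of opposite signs is precisely what the paper checks: condition~\cC\ fails, giving complete instability.
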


These results can now be used to show that the heteroclinic cycle loses stability completely as one of the transverse eigenvalues becomes positive.

\begin{thm}\label{thm:as-cu}
The heteroclinic cycle~$\Cyc_2$ is asymptotically stable if
\[r>\frac{\abs{\cos(\al)}}{4\cos^2(\al)-2}\]
and completely unstable otherwise.
\end{thm}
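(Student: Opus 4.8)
The plan is to feed the product transition matrix~\eqref{trans-matrix} into the dichotomy of Proposition~\ref{thm:3.4} and the $C_2^-$ criteria of Lemma~\ref{lem:StabC2m}, organising everything by the signs of the two transverse eigenvalues $t_1,t_2$. Throughout I keep the standing hypotheses of Lemma~\ref{lem:HetCycleM3}, so that $c_q,e_q>0$ and hence $a_q>0$ for $q\in\sset{\DSS,\DDS}$; only the $b_q=-t_q/e_q$ can change sign. I focus on $\al$ slightly below~$\ph$, so that $\cos(\al)>0$, $\cos(2\al)<0$ and $\sin(2\al)>0$ (the range $\al>\ph$ being handled symmetrically by exchanging the two saddles). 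By the bullet list preceding the statement, $r>\frac{\abs{\cos(\al)}}{4\cos^2(\al)-2}$ is exactly the region $t_1,t_2<0$, the intermediate strip is the mixed case $t_2<0<t_1$, and the remaining range is $t_1,t_2>0$; it therefore suffices to establish asymptotic stability on the first region and complete instability on the other two.

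On the region $t_1,t_2<0$ we have $b_1,b_2>0$, so $\M_\DSS$ and $\M_\DDS$ are entrywise nonnegative and Proposition~\ref{thm:3.4} applies: the cycle is asymptotically stable precisely when $\abs{\lmax}>1$ for $\M=\M_\DDS\M_\DSS$. From~\eqref{trans-matrix} its determinant is $a_1a_2$ and its trace is $b_1b_2+a_1+a_2$, and substituting~\eqref{eq:StabDSS}--\eqref{eq:StabDDS} I would record the two identities $1-(b_1b_2+a_1+a_2)+a_1a_2=\frac{3\,t_1t_2}{e_\DSS e_\DDS}$ and $a_1a_2-1=\frac{-32\K r\sin(\al)\cos(2\al)}{e_\DSS e_\DDS}$. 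Since $t_1t_2>0$ here, the first identity puts $1$ outside the interval spanned by the (real, or complex-modulus) spectrum of $\M$, so $\abs{\lmax}>1$ reduces to $\det\M>1$. The decisive observation is $t_1+t_2=8r\cos(2\al)$: as $t_1,t_2<0$ this forces $r\cos(2\al)<0$, whereupon the second identity yields $\det\M>1$ automatically. Hence $\Cyc_2$ is asymptotically stable throughout $r>\frac{\abs{\cos(\al)}}{4\cos^2(\al)-2}$.

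For the mixed strip $t_2<0<t_1$ I would use the instability clause of Lemma~\ref{lem:StabC2m}, for which it remains to verify $b_1b_2-a_1+a_2<0$; the same substitution gives the clean form $b_1b_2-a_1+a_2=\frac{t_1t_2-8\K\sin(2\al)}{e_\DSS e_\DDS}$, whose numerator is negative because $t_1t_2<0$ and $\K\sin(2\al)>0$, so the cycle is completely unstable there. The case $t_1,t_2>0$ is the one that genuinely steps outside the borrowed $C_2^-$ template, since now $b_1,b_2<0$ and the matrices carry negative entries; here I would instead invoke Proposition~\ref{thm:3.10}(a) and check that $\M$ violates one of~\cA--\cC. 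If $\lmax$ is complex, \cA\ fails; if it is real, then $b_1b_2>0$ forces $\lmax>b_1b_2+a_2$, which makes the two components of the associated eigenvector have opposite signs and so \cC\ fails. Either way $\si_q=-\infty$ for all $q$ and $\Cyc_2$ is completely unstable, which closes the dichotomy. I expect this last case to be the main obstacle---precisely because it is not covered by Lemma~\ref{lem:StabC2m} and must be settled through the eigenvector-sign analysis of Proposition~\ref{thm:3.10}---while the remainder is sign bookkeeping anchored on the three displayed identities.
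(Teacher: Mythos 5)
Your proposal is correct; I verified your three displayed identities against the eigenvalue relations $e_1=S+t_2$, $c_1=S-t_2$, $e_2=S+t_1$, $c_2=S-t_1$ with $S=2\K\sin(\al)$ (these give $1-\operatorname{tr}\M+\det\M=3t_1t_2/(e_1e_2)$, $\det\M-1=-32\K r\sin(\al)\cos(2\al)/(e_1e_2)$, and $b_1b_2-a_1+a_2=(t_1t_2-8\K\sin(2\al))/(e_1e_2)$, exactly as you claim), and the sign arguments built on them are sound. Your route overlaps with the paper's but is not identical. In the region $t_1,t_2<0$ the paper verifies the trace-type criterion of Lemma~\ref{lem:StabC2m}, writing $b_1b_2+a_1+a_2$ as $2$ plus a manifestly positive quantity, whereas you go through Proposition~\ref{thm:3.4} and reduce $\abs{\lmax}>1$ to $\det\M>1$ via positivity of the characteristic polynomial at $1$; this is closer in spirit to what the paper itself does afterwards in Section~\ref{sec:ev}, and both arguments are equally rigorous. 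In the mixed strip $t_2<0<t_1$ your argument coincides with the paper's: same quantity, same instability clause of Lemma~\ref{lem:StabC2m}, your $-8\K\sin(2\al)$ being precisely the paper's $-2St_1+2St_2$. The genuine difference is the region $t_1,t_2>0$: the paper's proof does not discuss it at all (it treats only $t_1,t_2<0$ and the mixed cases), even though it falls under the ``otherwise'' of the statement, while you settle it by showing $\lmax>b_1b_2+a_2$, so the leading eigenvector of~$\M$ has components of opposite signs, condition~\cC\ fails, and Proposition~\ref{thm:3.10}(a) gives complete instability. In this respect your write-up is more complete than the paper's. The only cosmetic looseness is your hedge about a possibly complex~$\lmax$ in that last case: since $b_1b_2>0$ there, the discriminant $(a_1-a_2-b_1b_2)^2+4a_1b_1b_2$ is positive, so $\lmax$ is automatically real; this does not affect correctness.
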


\begin{proof}
First, observe that there are relations between the eigenvalues~\eqref{eq:StabDSS}, \eqref{eq:StabDDS} of the linearization at the saddle points. Set $S=2\K\sin(\alpha)$. We have $e_1 = S+t_2$, $c_1 = S-t_2$, $e_2 = S+t_1$, $c_2 = S-t_1$ which are all positive. Consequently, $S>0$ and $c_1 = e_1 - 2t_2$, $c_2 = e_2 - 2t_1$.

If $t_1, t_2 < 0$ (the hypothesis of the theorem is satisfied) we have
\begin{align}
b_1b_2 + a_2 + a_1 &= 
2+\frac{t_1t_2 -2t_1e_1 -2t_2e_2}{e_1e_2}>2
\end{align}
since all terms are positive. Hence by Lemma~\ref{lem:StabC2m}, the heteroclinic cycle is asymptotically stable.

Now suppose that $t_2<0<t_1$ (the case $t_1<0<t_2$ is analogous). We have
\begin{align}
b_1b_2-a_1+a_2 &= 
\frac{t_1t_2 -2St_1+2St_2}{e_1e_2}<0
\end{align}
since all terms are negative. By Lemma~\ref{lem:StabC2m} the heteroclinic cycle is completely unstable.
\end{proof}

The dichotomy between asymptotic stability and complete instability appears to be nongeneric for~$C_2^-$-cycles compared to~\cite[Corollary 4.8]{Lohse2015}. This is due to the fact that~$e_2$ and~$c_2$ are not independent of~$t_1$. In fact, the case $t_1=0$ coincides with the degenerate situation $c_2=e_2$. Therefore, the assumption in~\cite[Corollary 4.8]{Lohse2015} that $b_1b_2-a_1+a_2>0$ even for small $t_1>0$ cannot be satisfied here.

%%%%%%%%%
\subsection{Eigenvalues and eigenvectors of the transition matrix products}
\label{sec:ev}

In the previous section we used results from~\cite{Podvigina2011, Lohse2015} (stated as Lemmas~\ref{lem:StabC1m} and~\ref{lem:StabC2m}) to determine the stability of the cycle. We now relate these to the hypotheses in Propositions~\ref{thm:3.4} and~\ref{thm:3.10} by calculating eigenvalues and eigenvectors of the transition matrix products. This is useful for our stability analysis in the higher-dimensional system in Section~\ref{sec:networks}.

For $\alpha \neq \frac{\pi}{2}$ the transition matrix product~$\M$ as defined in~\eqref{trans-matrix} has eigenvalues $\lambda_1>\lambda_2$ given by
\begin{align*}
  \lambda_1&=\frac{1}{2}\left(a_1+a_2+b_1b_2+\sqrt{(a_1-a_2-b_1b_2)^2+4a_1b_1b_2}\right),\\
  \lambda_2&=\frac{1}{2}\left(a_1+a_2+b_1b_2-\sqrt{(a_1-a_2-b_1b_2)^2+4a_1b_1b_2} \right)
\end{align*}
and corresponding eigenvectors% $u_1, u_2$
\begin{align*}
  u_1&=(u_{11},u_{12})=\left(1,\frac{a_1-a_2-b_1b_2+\sqrt{(a_1-a_2-b_1b_2)^2+4a_1b_1b_2}}{2b_1}\right),\\
   u_2&=(u_{21},u_{22})=\left(1,\frac{a_1-a_2-b_1b_2-\sqrt{(a_1-a_2-b_1b_2)^2+4a_1b_1b_2}}{2b_1}\right).
\end{align*}
If $t_1,t_2<0$, then both eigenvalues are real and hence condition~\cA~is satisfied. Moreover, by the calculations in the proof of Theorem~\ref{thm:as-cu}, we have
\begin{align*}
  \lambda_1&>\frac{1}{2}(a_1+a_2+b_1b_2)>1,
\end{align*}
so~\cB~is satisfied as well. Since in this case all transition matrices have only nonnegative entries, Proposition~\ref{thm:3.4} applies and the cycle is asymptotically stable. We note that~\cC~is also satisfied, because $4a_1b_1b_2>0$ implies that
\begin{align*}
 u_{11}u_{12}=u_{12}>\frac{a_1-a_2-b_1b_2+|a_1-a_2-b_1b_2|}{2b_1}>0.
\end{align*}
Similarly, for the components of the other eigenvector we get
\begin{align*}
 u_{21}u_{22}=u_{22}<\frac{a_1-a_2-b_1b_2-|a_1-a_2-b_1b_2|}{2b_1}<0.
\end{align*}
This is not directly related to condition~\cC, but will also be used in the following subsection.

On the other hand, if $t_2<0<t_1$, the transition matrix~$\M_1$ has a negative entry. Again by the calculations in the proof of Theorem~\ref{thm:as-cu} we have $a_1-a_2-b_1b_2>0$, and therefore
\begin{align*}
u_{11}u_{12}=u_{12}<\frac{a_1-a_2-b_1b_2}{2b_1}<0.
\end{align*}
Thus, \cC~is violated and by Proposition~\ref{thm:3.10}(a) the cycle is completely unstable. The case $t_1<0<t_2$ is analogous for the other transition matrix product.

%%%%%%%%%%%%%%%%%%
\section{Four Coupled Oscillator Populations}\label{sec:networks}

%%%%%%%%%
\subsection{Four interacting populations support a heteroclinic network}

In this section we consider $\maxpop=4$ coupled populations with $\maxdim=2$ phase oscillators each. For the coupling function~$g$ as in~\eqref{eq:CN2} and parameter $\dev\in[-1, 1]$ define the interaction functions
\begin{align}\label{eq:CM4}
\tGf(\theta_\tau; \vth) &= -\frac{1}{4}\big(\cos(\theta_{\tau, 1}-\theta_{\tau,2}+\vth+\alpha)+\cos(\theta_{\tau, 2}-\theta_{\tau,1}+\vth+\alpha)\big)\\
\label{eq:NPCM4}\tGt_\sigma(\vth)&=
g(\vth)+\K\left(1-\frac{1}{\maxdim}\right)\K_\sigma\cos(\vth+\alpha)
\end{align}
where $K_1 = 1$, $K_2=-1$, $K_3=-1+\dev$, $K_4=-1-\dev$. Consider the oscillator dynamics where the phase of oscillator~$k$ in population~$\sigma$ evolves according to
{\allowdisplaybreaks%
\begin{subequations}\label{eq:Dyn4x2lin}
\begin{align}
\begin{split}
\dot\theta_{1,k} &= \omega
+\tGt_1(\theta_{1,3-k}-\theta_{1,k})
+\K\tGf(\theta_{4},\theta_{1,3-k}-\theta_{1,k})
\\&\quad 
-\K\tGf(\theta_{2},\theta_{1,3-k}-\theta_{1,k})
+\K\tGf(\theta_{3},\theta_{1,3-k}-\theta_{1,k}),
\end{split}\\
\begin{split}
\dot\theta_{2,k} &= \omega
+\tGt_2(\theta_{2,3-k}-\theta_{2,k})
-\K\tGf(\theta_{4},\theta_{2,3-k}-\theta_{2,k})
\\&\quad 
+\K\tGf(\theta_{1},\theta_{2,3-k}-\theta_{2,k})
-\K\tGf(\theta_{3},\theta_{2,3-k}-\theta_{2,k}),
\end{split}\\
\begin{split}
\dot\theta_{3,k} &= \omega
+\tGt_3(\theta_{3,3-k}-\theta_{3,k})
-\K\tGf(\theta_{4},\theta_{3,3-k}-\theta_{3,k})
\\&\quad 
-\K\tGf(\theta_{1},\theta_{3,3-k}-\theta_{3,k})
+\K(1+\dev)\tGf(\theta_{2},\theta_{3,3-k}-\theta_{3,k}),
\end{split}\\
\begin{split}
\dot\theta_{4,k} &= \omega
+\tGt_4(\theta_{4,3-k}-\theta_{4,k})
-\K\tGf(\theta_{1},\theta_{4,3-k}-\theta_{4,k})
\\&\quad 
+\K(1-\dev)\tGf(\theta_{2},\theta_{4,3-k}-\theta_{4,k}) 
-\K\tGf(\theta_{3},\theta_{4,3-k}-\theta_{4,k}).
\end{split}%
\end{align}%
\end{subequations}%
}%
As shown in Appendix~\ref{app:Reduction}, for $\dev=0$ this is a nonpairwise approximation of the four interacting populations in~\cite{Bick2017c}. The vector field is $(\Sn\times\Tor)^\maxpop$-equivariant as~\eqref{eq:DynMxN}: $\Sn$~acts by permuting oscillators within populations and~$\Torn$ by a phase shift in each population. If $\dev=0$, the system is $(\Sn\times\Tor)^\maxpop\rtimes\Z_2$ equivariant where $\Z_2=\langle(34)\rangle$ acts by permuting populations three and four. If $\delta\neq 0$, then there is a parameter symmetry $(\delta, \theta_3, \theta_4)\mapsto(-\delta, \theta_4, \theta_3)$.

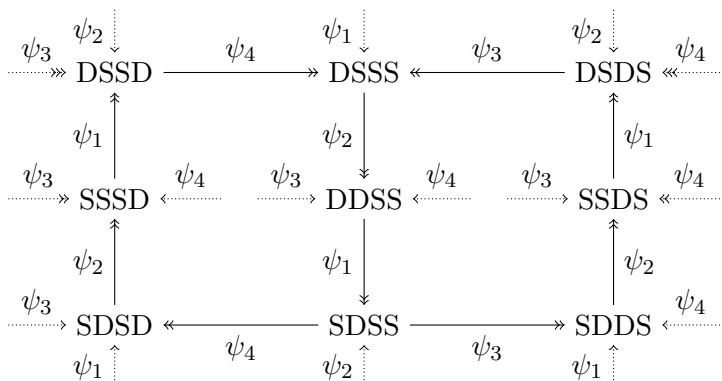
\begin{figure}[t]
%\fbox
{
\begin{tikzpicture}%[baseline={(0,-10)}]
\tikzstyle{breakd}= [dash pattern=on 23\pgflinewidth off 2pt]
  \matrix (m) [matrix of math nodes, row sep=1.5em,
    column sep=2em, ampersand replacement=\&]{
    \& \PH \& \& \PH \& \& \PH \& \\  
    \PH \& \DSSD \& \& \DSSS \& \& \DSDS \& \PH\\
    \& \& \& \& \& \& \\  
    \PH \& \SSSD \& \PH \& \DDSS \& \PH \& \SSDS \& \PH \\
    \& \& \& \& \& \& \\  
    \PH \& \SDSD \& \& \SDSS \& \& \SDDS \& \PH\\
    \& \PH \& \& \PH \& \& \PH \& \\ };
  \path[-stealth]
    (m-1-4) edge [->,densely dotted] node [left] {$\psi_1$} (m-2-4)
    (m-2-4) edge [->>] node [left] {$\psi_2$} (m-4-4)
    (m-4-3) edge [->,densely dotted] node [above] {$\psi_3$} (m-4-4)
    (m-4-5) edge [->,densely dotted] node [above] {$\psi_4$} (m-4-4)
    (m-4-4) edge [->>] node [left] {$\psi_1$} (m-6-4)
    (m-7-4) edge [->,densely dotted] node [left] {$\psi_2$} (m-6-4)
    
    (m-6-4) edge [->>] node [below] {$\psi_4$} (m-6-2)
    (m-7-2) edge [->,densely dotted] node [left] {$\psi_1$} (m-6-2)
    (m-6-1) edge [->,densely dotted] node [above] {$\psi_3$} (m-6-2)
    (m-6-2) edge [->>] node [left] {$\psi_2$} (m-4-2)
    (m-4-1) edge [->>,densely dotted] node [above] {$\psi_3$} (m-4-2)
    (m-4-3) edge [->,densely dotted] node [above] {$\psi_4$} (m-4-2)
    (m-4-2) edge [->>] node [left] {$\psi_1$} (m-2-2)
    (m-2-1) edge [->>>,densely dotted] node [above] {$\psi_3$} (m-2-2) 
    (m-1-2) edge [->,densely dotted] node [left] {$\psi_2$} (m-2-2)
    (m-2-2) edge [->>] node [above] {$\psi_4$} (m-2-4)

    (m-6-4) edge [->>] node [below] {$\psi_3$} (m-6-6)
    (m-7-6) edge [->,densely dotted] node [left] {$\psi_1$} (m-6-6)
    (m-6-7) edge [->,densely dotted] node [above] {$\psi_4$} (m-6-6)
    (m-6-6) edge [->>] node [right] {$\psi_2$} (m-4-6)
    (m-4-5) edge [->,densely dotted] node [above] {$\psi_3$} (m-4-6)
    (m-4-7) edge [->>,densely dotted] node [above] {$\psi_4$} (m-4-6)
    (m-4-6) edge [->>] node [right] {$\psi_1$} (m-2-6)
    (m-2-7) edge [->>>,densely dotted] node [above] {$\psi_4$} (m-2-6) 
    (m-1-6) edge [->,densely dotted] node [left] {$\psi_2$} (m-2-6)
    (m-2-6) edge [->>] node [above] {$\psi_3$} (m-2-4);
\end{tikzpicture}
}
\caption{\label{fig:KirkSilber}
A heteroclinic network~$\Net_2$ arises in $\maxpop=4$ coupled populations of $\maxdim=2$ oscillators. The types of arrowhead ($>$, $\gg$, $\ggg$)
%``$\arr$'', ``$\arrr$'', ``$\arrrr$'' 
indicate the eigenvalues for $\alpha=\frac{\pi}{2}$ and $\dev=0$: $\lambda^{\ggg}=-4\K-4r < \lambda^{\gg}=-2\K-4r < \lambda^{>}=-4r < 0 < 2\K-4r$. The $\psi_k$ along an arrow indicate the phase difference that corresponds to the invariant subspace.
}
\end{figure}

\begin{thm}
\label{thm:NetExist}
The system of coupled phase oscillator populations~\eqref{eq:Dyn4x2lin} supports a robust heteroclinic network~$\Net_2$---shown in Figures~\ref{fig:KirkSilber} and~\ref{fig:StabIndNot}---between relative equilibria with localized frequency synchrony.
\end{thm}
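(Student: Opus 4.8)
The plan is to realize $\Net_2$ as the union of two copies of the three--population cycle $\Cyc_2$ from Lemma~\ref{lem:HetCycleM3}, each embedded in a flow--invariant subspace of the four--population phase space, and then to verify that these two cycles are robust, connected, and distinct, so that their union is a network in the sense of Definition~\ref{defn:HetCycle}.

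First I would set up the invariant subspaces. Since $\maxdim=2$, the nontrivial oscillator--swap symmetry in $\Sn$ fixes both the in--phase configuration $\{\psi_\sigma=0\}$ and the splay configuration $\{\psi_\sigma=\pi\}$ of each population $\sigma$, so both are flow--invariant $\Fix$--subspaces; in particular $\{\theta_3\in\Sync\}$ and $\{\theta_4\in\Sync\}$ are invariant. The crucial observation is that for $\dev=0$ the restriction of~\eqref{eq:Dyn4x2lin} to $\{\theta_4\in\Sync\}$ coincides \emph{exactly} with the three--population system~\eqref{eq:Dyn3x2} on populations $1,2,3$: a synchronized population $4$ contributes $\pm\K\tGf(\theta_4;\vth)=\mp\tfrac{\K}{2}\cos(\vth+\alpha)$, which cancels precisely the correction $\K(1-\tfrac{1}{\maxdim})\K_\sigma\cos(\vth+\alpha)$ built into the self--coupling $\tGt_\sigma$ in~\eqref{eq:NPCM4}, leaving the bare coupling $g(\vth)$ together with the cyclic $\pm\K$ couplings among populations $1,2,3$. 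The analogous cancellation holds on $\{\theta_3\in\Sync\}$ for populations $1,2,4$ (after the cyclic relabelling $1\to2\to4\to1$).

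Under the parameter conditions of Lemma~\ref{lem:HetCycleM3}, applying that lemma in each subspace then yields two robust heteroclinic cycles of type $\Cyc_2$: the copy in $\{\theta_4\in\Sync\}$ is $(\DSSS,\DDSS,\SDSS,\SDDS,\SSDS,\DSDS)$ and the copy in $\{\theta_3\in\Sync\}$ is $(\DSSS,\DDSS,\SDSS,\SDSD,\SSSD,\DSSD)$. Each connection lies, as a saddle--sink connection, in a one--dimensional $\Fix$--subspace obtained by holding three of the four phase differences at $0$ or $\pi$, which gives robustness in the preliminary sense; since these subspaces stay invariant for all $\dev$, the structure also persists for small $\dev\neq 0$, where the reduction is only approximate. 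The two cycles share the directed segment $\DSSS\to\DDSS\to\SDSS$ (both connections lie in $\{\theta_3,\theta_4\in\Sync\}$) and branch at $\SDSS$, population $3$ desynchronizing along the first cycle and population $4$ along the second; hence the union is connected and consists of two distinct cycles, i.e.\ the heteroclinic network $\Net_2$ of Figure~\ref{fig:KirkSilber}.

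It then remains to establish the localized frequency synchrony: each vertex is an equilibrium of the phase--difference system in which every population is either in--phase or in splay, hence a relative equilibrium for the $\Torm$ phase--shift action, and a short frequency computation as in~\cite{Bick2018a} shows that synchronized and splay populations acquire distinct average angular frequencies, so $\Omega_\sigma\neq\Omega_\tau$ for some $\sigma\neq\tau$. Since the individual cycles come essentially for free from the exact reduction, I expect the main obstacle to be the global bookkeeping: confirming that the two embedded cycles meet \emph{exactly} along the shared segment $\DSSS\to\DDSS\to\SDSS$, so that the union is a genuine connected network with two distinct cycles rather than one longer cycle or two disjoint ones, and checking that the saddle--sink connections remain robust in the full four--dimensional flow once the transverse $\psi_3$-- and $\psi_4$--directions, which carry the arrowhead data of Figure~\ref{fig:KirkSilber}, are taken into account.
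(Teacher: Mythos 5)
Your proposal takes essentially the same route as the paper's proof: for $\dev=0$ restrict to the invariant subspaces $\pppS$ and $\ppSp$, observe that the dynamics there reduce to the three-population system~\eqref{eq:Dyn3x2}, apply Lemma~\ref{lem:HetCycleM3} in each subspace to obtain the two cycles $\Cth$ and $\Ctc$, and treat $\dev\neq 0$ as a symmetry-preserving perturbation with respect to which both cycles are robust. The only difference is one of detail: you verify the reduction explicitly via the cancellation of the $\cos(\vth+\alpha)$ terms (which the paper merely asserts) and you spell out the shared segment, the branching at $\SDSS$, and the frequency-synchrony check, all of which is consistent with the paper's argument.
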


\begin{proof}
First, suppose that $\dev=0$. The dynamics on the invariant subspaces~$\pppS$ and $\ppSp$ reduce to~\eqref{eq:Dyn3x2}. Hence by Lemma~\ref{lem:HetCycleM3}, the coupled phase oscillator populations~\eqref{eq:Dyn4x2lin} have a heteroclinic network with two quasi-simple cycles, denoted by $\Cth\subset\pppS$ and $\Ctc\subset\ppSp$. Having~$\dev\neq 0$ constitutes an equivariant perturbation that maintains the $(\Sn\times\Tor)^\maxpop$ symmetry, with respect to which both cycles are robust.
\end{proof}

The eigenvalues of the linearization at the equilibria can be evaluated explicitly. For example,
{\allowdisplaybreaks%
\begin{subequations}\label{eq:StabSDSS}
\begin{align}
\lambda^\SDSS_1 &= -2\K\sin(\al)-2\cos(\al)+4r\cos(2\al),\\
\lambda^\SDSS_2 &= 2\cos(\al)+4r\cos(2\al),\\
\lambda^\SDSS_3 &= 2\K(1+\dev)\sin(\al)-2\cos(\al)+4r\cos(2\al),\\ 
\lambda^\SDSS_4 &= 2\K(1-\dev)\sin(\al)-2\cos(\al)+4r\cos(2\al)
\end{align}
\end{subequations}
}%
determine the stability of the phase configurations~$\SDSS$ in each population respectively. As above, this gives explicit bounds for parameter values which support the heteroclinic network---note that linear stability of the equilibria now also depends on~$\delta$---and conditions to linearize the flow around the heteroclinic network (cf.~Lemma~\ref{lem:Linearizability}).

\begin{figure}[t]
{
\begin{tikzpicture}%[baseline={(0,-10)}]
\tikzstyle{breakd}= [dash pattern=on 23\pgflinewidth off 2pt]
  \matrix (m) [matrix of math nodes, row sep=2em,
    column sep=1.5em, ampersand replacement=\&]{
    \DSSD \&      \& \DSSS \& 	   \& \DSDS\\
    \SSSD \& \textcolor{gray}{\Ctc} \& \DDSS \& \textcolor{gray}{\Cth} \& \SSDS\\
    \SDSD \&      \& \SDSS \&      \& \SDDS\\};
  \path[-stealth]
    (m-1-3) edge [->] node [left] {$\sic_6$} node [right] {$\sih_6$} (m-2-3)    
    (m-2-3) edge [->] node [left] {$\sic_1$} node [right] {$\sih_1$} (m-3-3)
    
    (m-3-3) edge [->] node [below] {$\sic_2$} (m-3-1)
    (m-3-1) edge [->] node [left]  {$\sic_3$} (m-2-1)
    (m-2-1) edge [->] node [left]  {$\sic_4$} (m-1-1)
    (m-1-1) edge [->] node [above] {$\sic_5$} (m-1-3)

    (m-3-3) edge [->] node [below] {$\sih_2$} (m-3-5)
    (m-3-5) edge [->] node [right] {$\sih_3$} (m-2-5)
    (m-2-5) edge [->] node [right] {$\sih_4$} (m-1-5)
    (m-1-5) edge [->] node [above] {$\sih_5$} (m-1-3);
\end{tikzpicture}
}
\caption{\label{fig:StabIndNot}
The heteroclinic network~$\Net_2$ shown in Figure~\ref{fig:KirkSilber} is constituted by the heteroclinic cycles~$\Cth\subset\pppS$ and $\Ctc\subset\ppSp$. The stability indices along the saddle connections are denoted by~$\sih_q$ and~$\sic_q$, respectively.}
\end{figure}

Note that there are other equilibria that are not part of either cycle in the heteroclinic network. For example, on~$\SSSS$ all populations are phase synchronized and its stability is governed by the (quadruple) eigenvalue $\lambda^\SSSS = 4r\cos(2\alpha)-2\cos(\alpha)$. For $\delta=0$ we have $\lambda^\SSSS = \lambda^\DDSS_3$ which implies that~$\SSSS$ is linearly stable if the transverse eigenvalues within the corresponding subspace of each cycle are negative; cf.~Section~\ref{sec:Stability}.

%%%%%%%
\subsection{Stability of the cycles}

Note that by construction, the saddle~$\SDSS$ has a two-dimensional unstable manifold. Hence, neither cycle can be asymptotically stable for $\dev\approx0$ and $\alpha \approx \frac{\pi}{2}$. Since the cycles are quasi-simple, we can determine their stability by looking at the corresponding transition matrices. Because of the parameter symmetry, we restrict ourselves to the cycle~$\Cth\subset\pppS$ without loss of generality and just write~$\Cyc$ and~$\si_q$ for the remainder of this subsection.

Within the invariant subspace~$\pppS$, we have one contracting, expanding, and transverse direction with local coordinates denoted by $v, w, z$ as above. In addition there is another transverse direction---denoted by~$z^\perp$ in local coordinates---which is mapped to itself under the global map. The second transverse eigenvalues (those transverse to $\pppS$) evaluate to
{\allowdisplaybreaks
\begin{subequations}
\begin{align}
t^\perp_\DSSS &= -2\K\sin(\alpha)-2\cos(\alpha)+4r\cos(2\alpha),\\
t^\perp_\DDSS &= -2\K\delta\sin(\alpha)-2\cos(\alpha)+4r\cos(2\alpha),\\
t^\perp_\SDSS &= 2\K(1-\delta)\sin(\alpha)-2\cos(\alpha)+4r\cos(2\alpha),\\
t^\perp_\SDDS &= -2\K\delta\sin(\alpha)-2\cos(\alpha)+4r\cos(2\alpha),\\
t^\perp_\SSDS &= -2\K\sin(\alpha)-2\cos(\alpha)+4r\cos(2\alpha),\\
t^\perp_\DSDS &= -4\K\sin(\alpha)-2\cos(\alpha)+4r\cos(2\alpha).
\end{align}
\end{subequations}}%
There are two possibilities for transverse bifurcations when~$\delta$ changes. If~$\delta>0$, there is a transverse bifurcation at $t^\perp_\SDSS = 0$. But since $t^\perp_\SDSS = e_\SDSS$ the other cycle of the network then ceases to exist. If $\delta<0$, there is a possibility of two simultaneous transverse bifurcations when $t^\perp_\DDSS = t^\perp_\SDDS = 0$. Write $b^\perp_q = -t^\perp_q/e_q$. Again, the global maps are permutations of the local coordinate axes and the return map evaluates to
\[\h_q(w,z,z^\perp) = (B_q w^{b_{q}}z, D_q w^{a_q}, E_q w^{b_{q}^\perp}z^\perp).\]
where $B_q, D_q, E_q$ are constants.

In logarithmic coordinates $(\eta, \xi, \xi^\perp)$ this gives the transition matrix
\begin{align}\label{3x3-trans-matrix}
\M_q &= \left(\begin{array}{ccc}b_{q} & 1 & 0 \\a_q & 0 & 0 \\b_{q}^\perp & 0 & 1\end{array}\right)
\end{align}
that governs the stability of the cycle. Note that the upper left $2 \times 2$ submatrix is the same as the transition matrix~\eqref{2x2matrix}. In order to simplify notation we write $\xi_1 \  \widehat{=} \ \SDSS$ and $\xi_2,\dotsc, \xi_6$ for the subsequent equilibria of~$\Cyc$. Assuming that we are in a parameter region where the network exists, see Theorem \ref{thm:NetExist}, we can now make the following statement about the stability of its subcycles.
\newcommand{\sX}{\mu}
\newcommand{\sY}{\nu}
\begin{thm}\label{thm:stab-index}
  Assume that the cycle~$\Cyc$ is asymptotically stable\footnote{For $\delta=0$ explicit conditions are given in Theorem~\ref{thm:as-cu}. These can be amended for $\delta\neq 0$ to take the $\delta$-dependency into account.} within the three-dimensional subspace it is contained in and~$\abs{\delta}$ sufficiently small. Then we have the following dichotomy.
\begin{itemize}
\item[(i)] If the transition matrix product~$\M^{(2)}$ satisfies the eigenvector condition~\cC, then~$\Cyc$ is f.a.s.\ and its stability indices~$\si_q$, $q=1,\dotsc,6$, (with respect to the dynamics in $\Tor^\maxpop$) are given by
  \begin{align*}
  \si_q&= F^{\ind}(\sX_q,\sY_q,1)>-\infty,
  \end{align*}
  where $\sX_q=b_q\sX_{q+1}+a_q\sY_{q+1}+b_q^\perp,\ \sY_q=\sX_{q+1}$ for $q=2,\dotsc,6$ and $\sX_1=b_1^\perp,\ \sY_1=0$.
\item[(ii)] If~$\M^{(2)}$ does not satisfy condition~\cC, then~$\Cyc$ is completely unstable.
\end{itemize}
\end{thm}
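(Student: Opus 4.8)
The plan is to reduce everything to Proposition~\ref{thm:3.10} applied to the $3\times 3$ transition matrices~\eqref{3x3-trans-matrix}, exploiting their block structure. First I would record that each~$\M_q$ is block lower triangular: its upper-left $2\times 2$ block is the in-subspace matrix~\eqref{2x2matrix}, the $(3,3)$-entry is~$1$, and the only coupling to the third coordinate is the entry~$b_q^\perp$ in position $(3,1)$. Block lower-triangularity is preserved under multiplication, so every product~$\M^{(q)}$ again has this form, with upper-left block equal to the corresponding in-subspace product and $(3,3)$-entry~$1$. Hence the eigenvalues of each~$\M^{(q)}$ are the two eigenvalues of the in-subspace product---which are $\lambda_1>\lambda_2>0$ when $\Cyc$ is asymptotically stable within~$\pppS$ (Section~\ref{sec:ev}, together with the positive determinant of the in-subspace product)---and the eigenvalue~$1$. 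Since $\lambda_1>1$, we have $\lambda_1=\lmax$ and conditions~\cA,~\cB\ hold for every~$\M^{(q)}$ automatically. A short computation from the explicit second transverse eigenvalues shows that for $\abs{\delta}$ small only $\xi_1=\SDSS$ has $t^\perp>0$ (indeed $t^\perp_\SDSS=e_\SDSS$, so $b_1^\perp<0$), while $b_q^\perp>0$ for $q=2,\dotsc,6$. Thus~$\M_1$ is the unique transition matrix carrying a negative entry, placing us in the setting of Proposition~\ref{thm:3.10} rather than~\ref{thm:3.4}.

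With~\cA,~\cB\ automatic, the whole dichotomy rests on condition~\cC, and the key observation is that among all the~$\M^{(q)}$ only~$\M^{(2)}$ can violate it. This follows from the conjugacy $\M^{(q+1)}=\M_q\M^{(q)}\M_q^{-1}$, under which the dominant eigenvector transforms as $u^{(q+1)}=\M_q u^{(q)}$. If $u^{(2)}$ has all-positive entries, then applying the nonnegative matrices $\M_2,\dotsc,\M_6$ (whose structure maps strictly positive vectors to strictly positive vectors, since $a_q,b_q,b_q^\perp>0$ there) yields $u^{(3)},\dotsc,u^{(6)},u^{(1)}>0$, so~\cC\ holds for every~$q$; conversely a sign flip can only be produced by the single negative entry of~$\M_1$, that is, at the passage $u^{(1)}\mapsto u^{(2)}$. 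For part~(ii) this immediately gives the result: if $\M^{(2)}$ fails~\cC, then some~$\M^{(q)}$ fails~\cA--\cC\ and Proposition~\ref{thm:3.10}(a) yields complete instability. For part~(i), if $\M^{(2)}$ satisfies~\cC\ the propagation argument makes all~$\M^{(q)}$ satisfy~\cA--\cC, so Proposition~\ref{thm:3.10}(b) gives that~$\Cyc$ is f.a.s.\ with indices $\si_q=\min_l F^\ind(\beta^{(l)})$.

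It then remains to identify the~$\beta^{(l)}$ and collapse the minimum to the single covector $(\sX_q,\sY_q,1)$. Here I would argue that, because~$\M_1$ is the only matrix with a negative entry, there is exactly one transversely repelling direction---the coordinate~$z^\perp$ at $\xi_1=\SDSS$---so the bound on~$s$ in Proposition~\ref{thm:3.10} forces $s=1$ and a single~$\beta$ per connection. To compute it, express the escaping coordinate (the outgoing $\zeta^\perp$ at~$\xi_1$, amplified by $w^{b_1^\perp}$ with $b_1^\perp<0$) as a linear functional of the coordinates in the incoming cross section at~$\xi_q$ by composing the linearized maps. Since each~$\h_q$ acts by~$\M_q$ in the logarithmic variables, one step of this pullback reads $\eta\mapsto b_q\eta+\zeta$, $\zeta\mapsto a_q\eta$, $\zeta^\perp\mapsto b_q^\perp\eta+\zeta^\perp$, which turns a functional $(\sX_{q+1},\sY_{q+1},1)$ at~$\xi_{q+1}$ into $(b_q\sX_{q+1}+a_q\sY_{q+1}+b_q^\perp,\ \sX_{q+1},\ 1)$ at~$\xi_q$---exactly the stated recursion, with base case $(\sX_1,\sY_1,1)=(b_1^\perp,0,1)$ coming from the local map at~$\xi_1$. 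Feeding this covector into the index function gives $\si_q=F^\ind(\sX_q,\sY_q,1)$.

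The main obstacle I anticipate is this last step: rigorously matching the bookkeeping of~\cite{Garrido-da-Silva2016}---their count of rows with negative entries, the precise list of~$\beta^{(l)}$, and the claim $s=1$---to the escape-functional picture above, so that the minimum over~$l$ genuinely reduces to the single value $F^\ind(\sX_q,\sY_q,1)$. The accompanying parameter bookkeeping (verifying from the explicit second transverse eigenvalues that $b_q^\perp>0$ for $q\geq 2$ throughout the network-existence region, uniformly for small~$\abs{\delta}$, and that the in-subspace entries~$b_q$ stay nonnegative so that the propagation argument applies) is routine but must be carried out to confirm that~$\M_1$ is the only obstruction.
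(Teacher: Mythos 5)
Your reduction to Proposition~\ref{thm:3.10} follows the paper's route: you use the block-triangular structure of~\eqref{3x3-trans-matrix}, the fact that $\M_1$ is the only transition matrix with a negative entry (via $t^\perp_{\SDSS}>0$, all other $t^\perp_q<0$ for small $\abs{\delta}$), the automatic validity of \cA~and~\cB~from asymptotic stability inside the invariant subspace, and a propagation of \cC~around the cycle. Your conjugacy argument $u^{(q+1)}=\M_q u^{(q)}$ with nonnegative $\M_2,\dotsc,\M_6$ is a sound, self-contained replacement for the paper's citation of Lemma~3.6 of \cite{Garrido-da-Silva2016}, so part~(ii) and the f.a.s.\ conclusion of part~(i) are fine; moreover, your pullback recursion correctly identifies $(\mu_q,\nu_q,1)$ as the last row of $\M_1\M_6\dotsb\M_q$.

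The step you yourself flag as the ``main obstacle'' is, however, a genuine gap, and the shortcut you propose for it does not hold. The claim that a single transversely repelling direction forces $s=1$ is not what the bound in Proposition~\ref{thm:3.10} gives: $s$ is controlled by the number of negative-entry rows among \emph{all} partial products $\M_q$, $\M_{q+1}\M_q,\dotsc$, and for most $q$ several of these contain $\M_1$ (for $\si_6$, for instance, $\M_1\M_6$, $\M_2\M_1\M_6$, up to $\M^{(6)}$ all do), each potentially contributing a candidate $\beta^{(l)}$. The paper closes this with two arguments, both absent from your proposal. First, the reduction of the $\beta^{(l)}$ to rows of partial products is only licensed by condition~\eqref{cond-U}, $U^{-\infty}(\M^{(q)})=\R^3_-$ for all $q$, which must be verified; the paper does this using the signs of \emph{all three} eigenvectors of $\M^{(2)}$ --- the positive dominant one, $(0,0,1)$, and, crucially, the subdominant one whose first two components have opposite signs (Section~\ref{sec:ev}) --- to conclude that every $x\in\R^3_-$ has a nonzero coefficient on the dominant eigenvector. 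Your argument uses only the dominant eigenvector and never touches this. Second, among the surviving candidate rows one needs a comparison: since $\M_2,\M_3,\dotsc$ are nonnegative with third column $(0,0,1)^{\mathrm{T}}$, the last row of $\M_2\M_1\M_6\dotsb\M_q$ dominates that of $\M_1\M_6\dotsb\M_q$ componentwise while keeping last entry $1$, and $F^{\ind}$ is nondecreasing in each argument, so the shortest product containing $\M_1$ realizes the minimum. Only with both steps does $\si_q=F^{\ind}(\mu_q,\nu_q,1)$ follow from Proposition~\ref{thm:3.10}(b).
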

\begin{proof}
Since $t_1^\perp>0$ is the only positive transverse eigenvalue of an equilibrium in~$\Cyc$, the transition matrix~$\M_1$ is the only one with a negative entry, $b_1^\perp<0$. By Proposition~\ref{thm:3.10} the stability of~$\Cyc$ depends on whether or not all~$\M^{(q)}$ satisfy conditions~\cA--\cC~in Section~\ref{sec:prelim}. Statement~(ii) follows immediately by Proposition~\ref{thm:3.10}(a).

For~(i), we want to apply Proposition~\ref{thm:3.10}(b). By \cite[Lemma 3.6]{Garrido-da-Silva2016} it suffices to show that $\M^{(2)}$ satisfies conditions~\cA~and~\cB, because then all $\M^{(q)}$ satisfy~\cA--\cC. We calculate
\[\M^{(2)}=\M_1\M_6 \dotsb \M_2= \left(\begin{array}{ccc}* & * & 0 \\ * & * & 0 \\b_{1}^\perp \sX+\sY & b_{1}^\perp \tilde{\sX}+\tilde{\sY}  & 1\end{array}\right),\]
where $\sX,\sY,\tilde{\sX},\tilde{\sY}>0$. For a moment, suppose that $\delta=0$. Due to the symmetry of the system in the subspace~$\pppS$, the upper left $2 \times 2$ submatrix is the third power of the matrix~$\M$ in~\eqref{trans-matrix} and we can use our calculations from Section~\ref{sec:ev}. Note that~$\M^{(2)}$ has an eigenvalue $\lambda=1$ with eigenvector $(0, 0, 1)$. Its other two eigenvalues are the third powers of those of~$\M$, call them~$\lambda_1>\lambda_2$, by a slight abuse of notation. Then~$\lmax=\lambda_1>1$ under the assumptions of this theorem, so conditions~\cA~and~\cB~are satisfied. Proposition~\ref{thm:3.10}(b) applies and~$\Cyc$ is f.a.s.. Since eigenvectors and eigenvalues vary continuously in~$\delta$, the same is true for $\abs{\delta}$ sufficiently small.

In order to derive expressions for the stability indices we have to find the arguments~$\beta^{(l)} \in \R^3$ of the function~$F^\ind$ from Proposition~\ref{thm:3.10}. As is shown in~\cite{Garrido-da-Silva2016}, this becomes simpler if for all $q=1,\dotsc ,6$ we have
\begin{align}\label{cond-U}
  U^{-\infty}(\M^{(q)}):=\set{x \in \R^3_-}{\lim\limits_{k \to \infty} \left(\M^{{(q)}}\right)^kx= -\infty}=\R^3_-,
  \end{align}
  where $\R^3_-=\set{(x_1,x_2,x_3)}{x_1,x_2,x_3<0}$ and the convergence is demanded in every component. Clearly, this asymptotic behavior is controlled by the eigenvectors of~$\M^{(q)}$. Consider first the case~$q=2$. Under our assumptions, all components of the eigenvector corresponding to the largest eigenvalue~$\lmax>1$ have the same sign. Another eigenvector is~$(0,0,1)$. From Section~\ref{sec:ev} we know that the first two components of the remaining eigenvector have opposite signs. It follows that any $x\in\R^3_-$ written in the eigenbasis of~$\M^{(2)}$ must have a nonzero coefficient for the largest eigenvector. Therefore, $x\in U^{-\infty}(\M^{(2)})$, so~(\ref{cond-U}) holds. For~$q \neq 2$ note that all~$\M^{(q)}$ are similar, hence they have the same eigenvalues. Their eigenvectors are obtained by multiplying those of~$\M^{(2)}$ by~$\M_2, \M_3\M_2,\dotsc,\M_6\M_5\M_4\M_3\M_2$, respectively. This involves only matrices with nonnegative entries and thus does not affect our conclusions using the signs of the entries of the eigenvectors. Therefore,~(\ref{cond-U}) holds for all~$q=1,\dotsc,6$.

Since~(\ref{cond-U}) is satisfied, the only arguments~$\beta^{(l)} \in \R^3$ that must be considered for~$F^\ind$ in the calculation of~$\si_q$ are the rows of the (products of) transition matrices~$\M_q,\M_{q+1}\M_q,\M_{q+2}\M_{q+1}\M_q$ and so on. Among these, we only need to take rows into account where at least one entry is negative; if there are none, the respective index is equal to~$+\infty$. Negative entries can only occur when~$\M_1$ is involved in the product, and then only in the last row. So for $\si_q$ the last row of $\M_1\M_6\dotsb\M_q$ must be considered. Since~$\M_2$ has no negative entries and its third column is $(0,0,1)$, the first two entries in the last row of $\M_2\M_1\M_6\dotsb\M_q$ are greater than the respective entries of $\M_1\M_6\dotsb\M_q$, yielding a greater value for $F^{\ind}$. The same goes for $\M_3\M_2\M_1\M_6\dotsb\M_q$ and so on. Thus,~$\si_q$ is indeed obtained by plugging the last row of $\M_1\M_6\dotsb\M_q$ into~$F^\ind$. We get
\begin{align*}
  \si_1&=F^{\ind}(\mbox{last row of}\ \M_1),\\
  \si_2&=F^{\ind}(\mbox{last row of}\ \M_1\M_6\M_5\M_4\M_3\M_2),\\
  \si_3&=F^{\ind}(\mbox{last row of}\ \M_1\M_6\M_5\M_4\M_3),\\
  \si_4&=F^{\ind}(\mbox{last row of}\ \M_1\M_6\M_5\M_4),\\
  \si_5&=F^{\ind}(\mbox{last row of}\ \M_1\M_6\M_5),\\
  \si_6&=F^{\ind}(\mbox{last row of}\ \M_1\M_6). 
\end{align*}
The lower right entry of all these matrices is~1, so for all $q=1,\dotsc,6$ we can write $\si_q=F^\ind(\sX_q,\sY_q,1)>-\infty$. Since the last row of~$\M_1$ is~$(b_1^\perp,0,1)$, we have~$\sX_1=b_1^\perp$ and~$\sY_1=0$ as claimed. The recursive relations now follow immediately from~(\ref{3x3-trans-matrix}).
\end{proof}

We conclude this section with a few remarks on these stability results. First, consider condition~\cC. Let $\wmax=(\wmax_1, \wmax_2 ,\wmax_3)$ be the eigenvector of~$\M^{(2)}$ associated with $\lmax$. For $\delta=0$, note that $(\wmax_1, \wmax_2)$ is an eigenvector of~$\M$ associated with its largest eigenvalue, so both of its components have the same sign. To fulfill~\cC, we need $\sgn(\wmax_3)=\sgn(\wmax_{1/2})$. A straightforward calculation yields
\[\wmax_3=\frac{(b_{1}^\perp \sX+\sY)\wmax_1+ (b_{1}^\perp \tilde{\sX}+\tilde{\sY})\wmax_2}{\lmax-1},\]
so it is sufficient to have $\sY>-b_{1}^\perp \sX$ and $\tilde{\sY}>-b_{1}^\perp \tilde{\sX}$. This condition is stronger than assuming~\cC, and as soon as it is satisfied, we have~$\si_2=+\infty$.

By contrast, the indices~$\si_1$ and~$\si_6$ are always finite because~$F^\ind$ has at least one positive and at least one negative argument through~$b_1^\perp$. This makes sense because they are indices along connections shared with the other cycle in the network, while~$\si_2$ belongs to the trajectory that is furthest away from the common ones. For the other indices~$\si_3,\si_4,\si_5$ there is not necessarily a negative argument, so they could be equal to~$+\infty$. From the recursive relations between the~$\sX_q$ and~$\sY_q$ we see that~$\si_q=+\infty$ implies~$\si_{q-1}=+\infty$ for~$q\in\sset{3,4,5}$, which is also plausible in view of the  architecture of the heteroclinic network.

Since $\si_q>-\infty$ for all $q$, we have shown that under the assumptions of Theorem \ref{thm:stab-index} the cycle~$\Cyc$ is not only f.a.s., but indeed attracting a positive measure set along each of its connections. Straightforward constraints on $\sX_q, \sY_q$ given through the definition of $F^\ind$ determine the signs of all $\si_q$ and thus yield necessary and sufficient conditions for $\Cyc$ to be even e.a.s. A simple example for such a necessary condition is $b_1^{\perp}>-1$, so that $\si_1>0$. This is the same as~$e_{\SDSS}>t_{\SDSS}^\perp$ and in terms of the network parameters amounts to~$\dev>0$, cf.~Figure~\ref{fig:StabIndNum}.

\begin{figure}[t]
%\fbox
{
\subfloat[$\delta=0$]{
\begin{tikzpicture}%[baseline={(0,-10)}]
\tikzstyle{breakd}= [dash pattern=on 23\pgflinewidth off 2pt]
  \matrix (m) [matrix of math nodes, row sep=2em,
    column sep=2em, ampersand replacement=\&]{
    \DSSD \& \DSSS \& \DSDS\\
    \SSSD \& \DDSS \& \SSDS\\
    \SDSD \& \SDSS \& \SDDS\\};
  \path[-stealth]
    (m-1-2) edge [->] node [left] {$0$} node [right] {$0$} (m-2-2)    
    (m-2-2) edge [->] node [left] {$0$} node [right] {$0$} (m-3-2)
    
    (m-3-2) edge [->] node [below] {$+\infty$} (m-3-1)
    (m-3-1) edge [->] node [left]  {$+\infty$} (m-2-1)
    (m-2-1) edge [->] node [left]  {$+\infty$} (m-1-1)
    (m-1-1) edge [->] node [above] {$+\infty$} (m-1-2)

    (m-3-2) edge [->] node [below] {$+\infty$} (m-3-3)
    (m-3-3) edge [->] node [right] {$+\infty$} (m-2-3)
    (m-2-3) edge [->] node [right] {$+\infty$} (m-1-3)
    (m-1-3) edge [->] node [above] {$+\infty$} (m-1-2);
\end{tikzpicture}}
\hfill
\subfloat[$0<\delta\ll 1$]{
\begin{tikzpicture}%[baseline={(0,-10)}]
\tikzstyle{breakd}= [dash pattern=on 23\pgflinewidth off 2pt]
  \matrix (m) [matrix of math nodes, row sep=2.2em,
    column sep=2em, ampersand replacement=\&]{
    \DSSD \& \DSSS \& \DSDS\\
    \SSSD \& \DDSS \& \SSDS\\
    \SDSD \& \SDSS \& \SDDS\\};
  \path[-stealth]
    (m-1-2) edge [->] node [left] {$-$} node [right] {$+$} (m-2-2)    
    (m-2-2) edge [->] node [left] {$-$} node [right] {$+$} (m-3-2)
    
    (m-3-2) edge [->] node [below] {$+\infty$} (m-3-1)
    (m-3-1) edge [->] node [left]  {$+\infty$} (m-2-1)
    (m-2-1) edge [->] node [left]  {$+$} (m-1-1)
    (m-1-1) edge [->] node [above] {$+$} (m-1-2)

    (m-3-2) edge [->] node [below] {$+\infty$} (m-3-3)
    (m-3-3) edge [->] node [right] {$+\infty$} (m-2-3)
    (m-2-3) edge [->] node [right] {$+\infty$} (m-1-3)
    (m-1-3) edge [->] node [above] {$+\infty$} (m-1-2);
\end{tikzpicture}
}}
\caption{\label{fig:StabIndNum}
Stability indices of the heteroclinic cycles~$\Cth$ and~$\Ctc$ as in Figure~\ref{fig:StabIndNot} for $(\alpha,K,r)=(\frac{\pi}{2}, 0.2, 0.01)$. The symbol `$+$' for a stability index denotes `positive and finite' and `$-$' denotes `negative and finite'.
}
\end{figure}
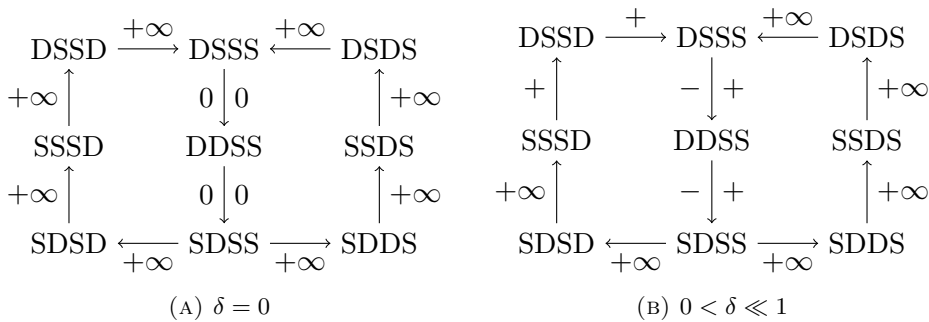

Similar conditions for the other~$\si_q$ become increasingly cumbersome to write down explicitly and we gain little insight from them. Instead, we evaluated the stability indices (of both cycles) numerically. Two cases are illustrated in Figure~\ref{fig:StabIndNum}. We conjecture that there is an open parameter region where the assumptions of Theorem~\ref{thm:stab-index} are satisfied and the network is maximally stable (though not asymptotically stable) due to both cycles being e.a.s.. We comment further on this in the next subsection.

%%%%%%%
\subsection{Stability of the heteroclinic network}

Even if the stability of all cycles that constitute a heteroclinic network is known, it is hard to make general conclusions about the stability of the network as a whole. For ``simple'' cases, like the Kirk and Silber network~\cite{Kirk1994}, a comprehensive study can be found in~\cite{Castro2014}. Based on the results in the previous section, one can draw several conclusions. If one cycle of~$\Net_2$ is f.a.s.---conditions are given in Theorem~\ref{thm:stab-index}---then the network itself is f.a.s. Moreover, if one cycle, say~$\Cth$, is e.a.s.\ and the heteroclinic trajectories in~$\Ctc$ that are not contained in~$\Cth$ have positive stability indices, then the network is e.a.s.---this is the case in Figure~\ref{fig:StabIndNum}(\sfb).

\newcommand{\sa}{\textrm{s}_\alpha}
\newcommand{\ca}{\textrm{c}_\alpha}

The geometry of the two-dimensional manifold~$\Wu(\SDSS)\subset\SDpp$ gives insight into the dynamics near the heteroclinic network~$\Net_2$. For simplicity, we focus on the case $\alpha=\frac{\pi}{2}$. By~\eqref{eq:Dyn4x2lin}, the dynamics of the phase differences on~$\SDpp$ are given by
\begin{subequations}
\begin{align}
\dot\psi_3 &= \sin(\psi_3)\left(\K\cos(\psi_4)
-4r\cos(\psi_3)
+\K(1+2\delta)
\right)\\
\dot\psi_4 &= \sin(\psi_4)\left(\K\cos(\psi_3)
-4r\cos(\psi_4)
+\K(1-2\delta)
\right).
\end{align}
\end{subequations}
Note that if $\abs{\delta}\K < 2\abs{r}$ there is a (saddle) equilibrium $\xi^{\SDpD}\in\SDpD$ with $\psi_3 = \arccos(\delta\K/2r)\in (0, \pi)$. For the same condition there is an analogous equilibrium $\xi^{\SDDp}\in\SDDp$ with $\psi_4 = \arccos(-\delta\K/2r)\in (0, \pi)$. The stable manifolds of these saddle equilibria now organize the dynamics on~$\SDpp$.

\begin{figure}[t]
%\fbox
{
\subfloat[$\delta=0$]{
\includegraphics[scale=\imagescaling]{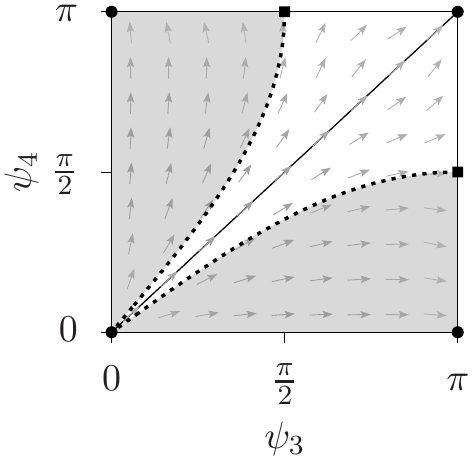}
}\qquad
\subfloat[$\delta=0.07$]{
\includegraphics[scale=\imagescaling]{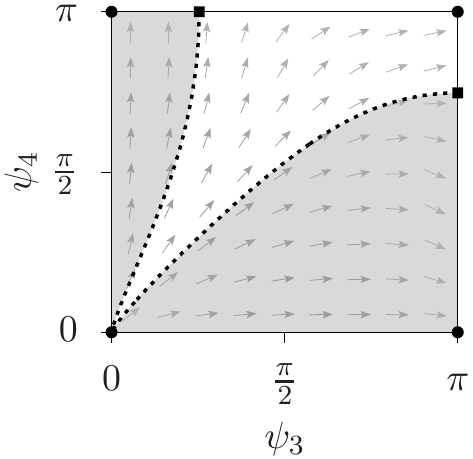}
}}
\caption{\label{fig:WuSDSS}
The two-dimensional unstable manifold $\Wu(\SDSS)\subset\SDpp$ of~$\SDSS$ (bottom left circle) not only contains points (shaded) which are in the stable manifold of~$\DSDS$ (top left circle) and~$\SDSD$ (bottom right circle) but also points in the stable manifold of~$\SDDD$ (top right circle). The stable manifolds of the additional equilibria~$\xi^{\SDpD}$ and~$\xi^{\SDDp}$ (black squares) separate the initial conditions. The other parameters are $(\alpha,K,r)=(\frac{\pi}{2}, 0.2, 0.01)$.
}
\end{figure}

\begin{prop}\label{lem:Wedge}
For the heteroclinic network in Theorem~\ref{thm:NetExist} and parameters $\alpha\approx\frac{\pi}{2}$, $\delta\approx 0$, and an open interval of~$r$ there is robustly an open wedge of initial conditions on~$\Wu(\SDSS)$ such that the trajectories converge to an equilibrium which is not contained in either cycle constituting~$\Net_2$.
\end{prop}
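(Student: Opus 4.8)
The plan is to analyze the reduced two-dimensional dynamics on $\SDpp$ given explicitly by the system for $\dot\psi_3, \dot\psi_4$ and to locate an open region of initial conditions on $\Wu(\SDSS)$ whose forward trajectories leave the neighborhood of both cycles. First I would identify the relevant equilibria on $\SDpp$: the corners of the square $[0,\pi]^2$ in the $(\psi_3,\psi_4)$-plane correspond to the four relative equilibria $\SDSS$, $\SDDS$, $\SDSD$, and $\SDDD$ (the last of which lies in neither cycle), while the two saddle equilibria $\xi^{\SDpD}$ and $\xi^{\SDDp}$ sit on the boundary edges whenever $\abs{\delta}\K < 2\abs{r}$. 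At $\delta = 0$ these interior-edge saddles are present for an open interval of $r$, and their stable manifolds partition $\SDpp$ into basins. The key geometric claim is that part of $\Wu(\SDSS)$ falls into the basin of $\SDDD$.

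The main steps I would carry out are as follows. First, compute $\Wu(\SDSS)$ explicitly or qualitatively near the corner $\SDSS$ at $(\psi_3,\psi_4)=(0,0)$ by linearizing: the two unstable eigendirections point into the interior of the square, so $\Wu(\SDSS)$ is a two-dimensional sheet emanating from that corner. Second, I would show that the stable manifolds of $\xi^{\SDpD}$ and $\xi^{\SDDp}$ act as separatrices: trajectories on one side flow toward the corners $\SDSD$ or $\SDDS$ (which lie on the cycles), while trajectories caught between these separatrices are funneled toward the opposite corner $\SDDD$. This is the content suggested by Figure~\ref{fig:WuSDSS}. Concretely, I would verify the stability types of all six equilibria from the linearization of the $(\dot\psi_3,\dot\psi_4)$ system, confirm that $\SDDD$ is a sink within $\SDpp$ for the relevant parameters, and check that the two saddle separatrices bound a nonempty open wedge of the unstable manifold draining into $\SDDD$. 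Third, to promote the $\delta=0$ picture to the robust statement for $\delta\approx 0$, I would invoke structural stability: all equilibria are hyperbolic within $\SDpp$, their stable/unstable manifolds vary continuously with $\delta$, and the wedge persists under small perturbation as long as $\abs{\delta}\K<2\abs{r}$ keeps the interior saddles alive, which holds on an open set of $(\delta, r)$.

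The hardest part will be establishing rigorously that $\Wu(\SDSS)$ genuinely intersects the basin of $\SDDD$ in an \emph{open} set, rather than merely in a measure-zero separatrix. This requires a transversality or phase-plane argument showing that the unstable sheet from $\SDSS$ is not entirely captured by the stable manifolds of $\xi^{\SDpD}$ and $\xi^{\SDDp}$ before reaching the corners. I expect to argue this by tracking the two unstable eigendirections at $\SDSS$: one eigendirection must lie on the $\SDDD$ side of both separatrices, so a whole sector of the local unstable manifold—hence an open wedge—flows into the $\SDDD$ basin. Establishing that the separatrices do not sweep across and re-capture this sector may require a monotonicity or invariant-region argument for the planar flow; since the vector field factors through $\sin(\psi_3)$ and $\sin(\psi_4)$, the edges $\psi_3=0,\pi$ and $\psi_4=0,\pi$ of the square are invariant, which confines trajectories to the open square and makes such a Poincaré–Bendixson-style trapping argument tractable. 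Once the wedge is produced for $\alpha=\frac{\pi}{2}$, the extension to $\alpha\approx\frac{\pi}{2}$ follows again by hyperbolicity and continuous dependence of the invariant manifolds on the parameter.
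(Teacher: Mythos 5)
Your overall geometric picture coincides with the paper's: work in the invariant plane $\SDpp$, identify the edge saddles $\xi^{\SDpD}$, $\xi^{\SDDp}$, use their stable manifolds as separatrices, and conclude that the sector between them drains into $\SDDD$. However, the step you correctly flag as the hardest one---showing that the separatrices genuinely carve an \emph{open} wedge of $\Wu(\SDSS)$ into the basin of $\SDDD$, rather than the sector being re-captured or terminating at some interior attractor---is left unresolved in your plan, and the eigendirection-tracking/transversality argument you sketch is not how it gets closed. The missing idea is an elementary computation: rule out equilibria in the open square $(0,\pi)^2$. For $\alpha=\frac{\pi}{2}$, $\delta=0$, any interior equilibrium must satisfy $(\K+4r)(\cos\psi_3-\cos\psi_4)=0$, hence lies on the diagonal $\psi_3=\psi_4=:\psi$, where the dynamics reduce (up to the positive factor $\sin\psi$) to $\dot\psi=(\K-4r)\cos(\psi)+\K$; this has no zeros precisely when $0<r<\K/2$, which is exactly the open interval of~$r$ for which the cycles exist and are asymptotically stable in their subspaces (Lemma~\ref{lem:HetCycleM3}, Theorem~\ref{thm:StabPiH}).

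Once the interior is equilibrium-free (and therefore also free of periodic orbits, since a planar periodic orbit would have to enclose one), everything you were worried about follows at once: backward trajectories of points on $\Ws(\xi^{\SDpD})$ and $\Ws(\xi^{\SDDp})$ have nowhere to accumulate except the source corner, so these stable manifolds are source-saddle connections $\HC{\SDSS}{\xi^{\SDpD}}$ and $\HC{\SDSS}{\xi^{\SDDp}}$ emanating from $\SDSS$ itself. In particular they cannot ``sweep across'' a sector of the local unstable manifold---they are contained in $\Wu(\SDSS)$ and subdivide $(0,\pi)^2$ into three open wedges by construction, and the middle wedge must flow to $\SDDD$ because no other limit set is available to it. This also gives the robustness claim more cleanly than your appeal to generic structural stability: source-saddle connections inside a flow-invariant plane are robust, and the configuration persists for $\alpha\approx\frac{\pi}{2}$, $\delta\approx 0$ as long as $\abs{\delta}\K<2\abs{r}$ keeps the edge saddles alive. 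So your plan is salvageable, but only after inserting the no-interior-equilibria computation, which is the actual content of the paper's proof.
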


\begin{proof}
We first give conditions on the parameters that ensure that there are no asymptotically stable sets in the invariant set $(0,\pi)^2\subset\SDpp$. It suffices to show that there are no equilibria in $(0,\pi)^2$. By direct calculation, one can verify that for $\alpha=\frac{\pi}{2}$, $\delta=0$ any equilibrium in~$(0,\pi)^2$ must lie in $\sset{\psi:=\psi_3=\psi_4}\subset (0,\pi)^2$. The dynamics of~$\psi$ are given by $\dot\psi=(\K-4r)\cos(\psi)+\K$. Hence, there are no equilibria if $0<r<\K/2$ given $\K>0$; these are exactly the conditions for there to be an asymptotically stable heteroclinic cycle in each subspace by Lemma~\ref{lem:HetCycleM3} and Theorem~\ref{thm:StabPiH}.

Now $\Ws(\xi^{\SDpD})$, $\Ws(\xi^{\SDDp})$ are---as source-saddle connections---robust heteroclinic trajectories $\HC{\SDSS}{\xi^{\SDpD}}$, $\HC{\SDSS}{\xi^{\SDDp}}$. These separate~$(0,\pi)^2$ into three distinct sets of initial conditions which completes the proof.
\end{proof}

The dynamics on $(0, \pi)^2\subset\SDpp$ are shown in Figure~\ref{fig:WuSDSS}. The stable manifolds of $\xi^{\SDpD}$ and~$\xi^{\SDDp}$ subdivide $(0, \pi)^2$ robustly into three wedges with nonempty interior that lie in the stable manifolds of~$\SDDS$, $\SDSD$, and $\SDDD$, respectively. In particular, this suggests that a significant part of trajectories passing by~$\SDSS$ will approach~$\SDDD$ which is not contained in either heteroclinic cycle of the network~$\Net_2$.

\begin{rem}
Let~$\Net$ be a heteroclinic network and let~$\xi_p$, $p=1, \dotsc, P$, denote the equilibria of all its cycles. Abusing the ambiguity of Definition~\ref{defn:HetCycle}%
\footnote{Definition~\ref{defn:HetCycle} is somewhat ambiguous since it does not specify how many heteroclinic connections belong to the heteroclinic network. If~$\Net_2$ only contains one (one-dimensional) heteroclinic trajectory (as suggested by the proof of Theorem~\ref{thm:NetExist}) then it is clearly not almost complete since $\dim(\Wu(\SDSS))=2$. Strictly speaking, for the discussion of completeness we actually consider a network~$\overline{\Net}_2$ that contains the equilibria of~$\Net_2$ and \emph{all} connecting heteroclinic trajectories.}, we call~$\Net$ \emph{complete}~\cite{Ashwin2018} or \emph{clean}~\cite{Field2017} if $\Wu(\xi_p)\subset\bigcup_{q=1}^{P}\Ws(\xi_q)$ for all~$p$ and \emph{almost complete} if the set $\Wu(\xi_p)\cap\bigcup_{q=1}^{P}\Ws(\xi_q)$ is of full measure for all~$p$ and any Riemannian measure on~$\Wu(\xi_p)$; see also~\cite{Bick2018a} for a detailed discussion in the context of coupled oscillator populations. 

For~$\Net_2$ to be almost complete for $\delta\approx 0$, the set $\Wu(\SDSS)\cap(\Ws(\SDDS)\cup\Ws(\SDSD))$ would have to be of full (Lebesgue) measure in~$\SDpp$. However, Lemma~\ref{lem:Wedge} shows that there is a set of nonvanishing measure in~$\SDpp$ which lies in the stable manifold of~$\SDDD$, an equilibrium which is not in the network. Hence, $\Net_2$ cannot be almost complete (nor complete).
\end{rem}

\begin{figure}[t]
\begin{center}
\subfloat[$\delta=0$]{
\includegraphics[scale=\imagescaling]{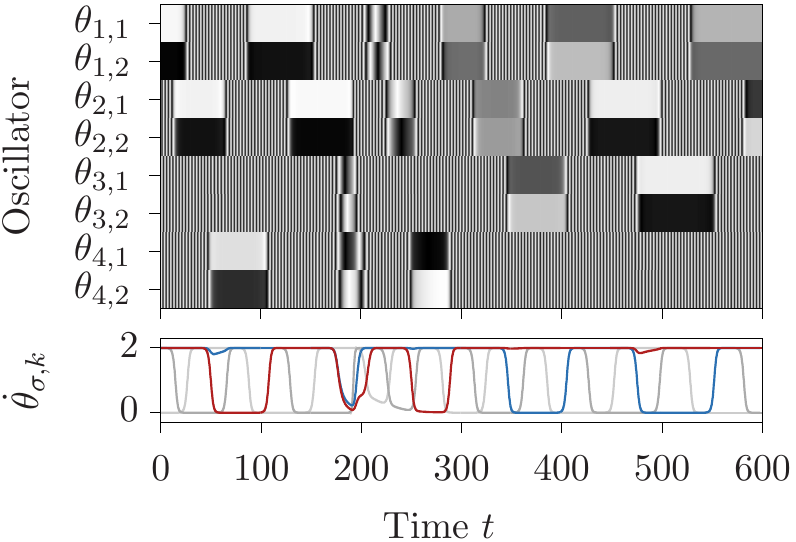}
\ 
\includegraphics[scale=\imagescaling]{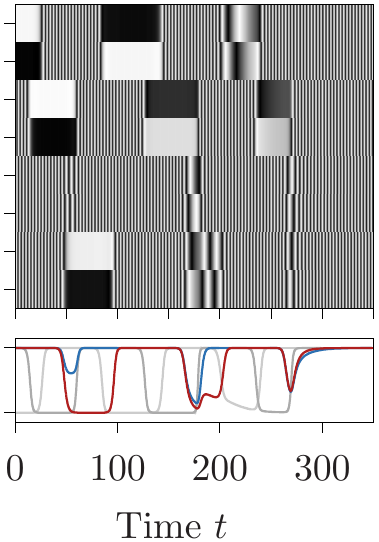}\quad
}\\
%\hfill
\subfloat[$\delta=0.01$]{
\includegraphics[scale=\imagescaling]{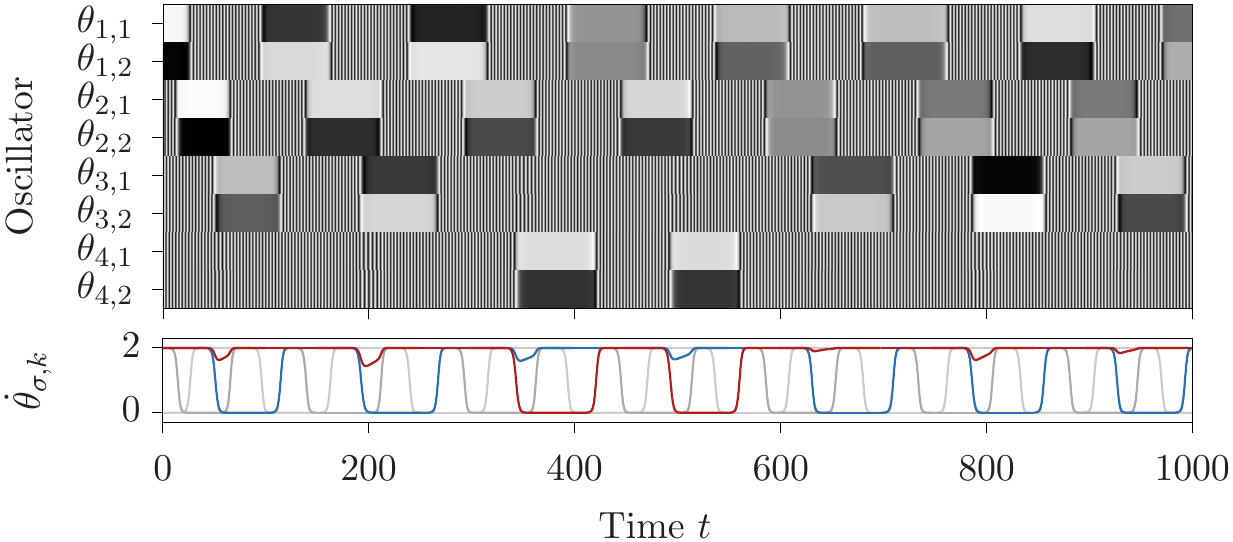}
}
\end{center}
\caption{\label{fig:Dynamics4x2}
The phase dynamics of system~\eqref{eq:Dyn4x2linNum} with noise strength~$\eta=10^{-4}$ shows different realizations of transitions along the heteroclinic network; we have $\alpha=\frac{\pi}{2}$, $r=0.01$, $\K=0.2$ and $\delta=0$ in Panel~(\sfa) and $\delta=0.01$ in Panel~(\sfb). Since the network is not stable, trajectories may go away from the heteroclinic network along the connection $\HC{\SDSS}{\SDDD}$ either to return to a neighborhood of the network (top left) or to converge to the sink~$\SSSS$ (top right).
}
\end{figure}

We further explored the dynamics near the heteroclinic network for $\maxpop=4$ populations of $\maxdim=2$ oscillators using numerical simulations with additive noise. Specifically, for~\eqref{eq:Dyn4x2lin} written as $\dot\theta_{\sigma,k}=\omega+Y_{\sigma,k}(\theta)$---see~\eqref{eq:DynMxN}---and independent Wiener processes~$W_{\sigma,k}$, we solved the stochastic differential equation 
\begin{equation}\label{eq:Dyn4x2linNum}
\dot\theta_{\sigma,k}=\omega+Y_{\sigma,k}(\theta)+\eta W_{\sigma,k}
\end{equation} 
using \textsc{XPP}~\cite{Ermentrout2002}. As shown in Figure~\ref{fig:Dynamics4x2}, the solutions show transitions either along the heteroclinic trajectory $\HC{\SDSS}{\SDDS}$ or $\HC{\SDSS}{\SDSD}$. 
Since the heteroclinic network is not asymptotically stable, the dynamics also show excursions away from the network: there are trajectories that follow the heteroclinic connection $\HC{\SDSS}{\SDDD}$ before either returning to the neighborhood of the network (Figure~\ref{fig:Dynamics4x2}(\sfa) left) or approaching the sink~$\SSSS$ (Figure~\ref{fig:Dynamics4x2}(\sfa) right). If the symmetry is broken by $\delta>0$ (Figure~\ref{fig:Dynamics4x2}(\sfb)), trajectories appear to predominantly follow the principal direction $\HC{\SDSS}{\SDDS}$ with the largest unstable eigenvalue as expected~\cite{Rodrigues2017}.

%%%%%%%%%%%%%%%%%%
\section{Discussion}\label{sec:disc}

Coupled populations of identical phase oscillators do not only support heteroclinic networks between sets with localized frequency synchrony but the stability properties of these networks can also be calculated explicitly. Rather than looking at dynamical systems with generic properties at the equilibria, we focus on a specific class of vector fields and obtain explicit expressions for the stability of a heteroclinic network---a feature of the global dynamics of the system---in terms of the coupling parameters. In particular, this does not exclude the possibility that stability properties depend nonmonotonously on the coupling parameters. The coupling parameters themselves can be related to physical parameters of interacting real-world oscillators, for example through phase reductions of neural oscillators~\cite{Hansel1993a}.

Our results motivate a number of further questions and extensions, in particular in the context of the first part of the paper~\cite{Bick2018a}. First, we here restricted ourselves to the quotient system; this is possible by considering nongeneric interactions between oscillator populations. The question what the dynamics look like if the resulting symmetries are broken, will be addressed in future research. Second, what happens for coupled populations with $\maxdim>2$ oscillators? The existence conditions for cycles in~\cite{Bick2018a} and the numerical results in~\cite{Bick2017c} suggest existence of such a network, but stability conditions would rely on the explicit calculation of the stability indices~\cite{Podvigina2011}. In particular, the main tool used here, namely the results for quasi-simple cycles~\cite{Garrido-da-Silva2016}, ceases to apply since the unstable manifold of~$\SDSS$ would be of dimension four and contain points with different isotropy; cf.~\cite{Bick2018a}.

How coupling structure shapes the global dynamics of a system of oscillators is a crucial question in many fields of application. Hence, our results may be of practical interest: in the neurosciences for example, some oscillators may fire at a higher frequency than others for some time before another neural population becomes more active~\cite{Tognoli2014}. The networks here mimic this effect to a certain extent: trajectories which move along the heteroclinic network correspond to sequential speeding up and slowing down of oscillator populations. At the same time, large scale synchrony is thought to relate to neural disfunction~\cite{Uhlhaas2006}. From this point of view, the (in)stability results of Section~\ref{sec:networks} appear interesting, since trajectories in numerical simulations may get ``stuck'' in the fully phase synchronized configuration~$\SSSS$. Hence, our results may eventually elucidate how to design networks that avoid transitions to a highly synchronized pathological state.

%%%%%%%%%%%%%%%%%%
\section*{Acknowledgements}

The authors are grateful to P~Ashwin, S~Castro, and L~Garrido-da-Silva for helpful discussions, as well as for mutual hospitality at the Universities of Hamburg and Exeter during several visits where most of this work was carried out.

%%%%%%%%%%%%%%%%%%
%%%%%%%%%%%%%%%%%%
\appendix

%%%%%%%%%%%%%%%%%%
\section{Phase Oscillator Populations with Nonpairwise Coupling}
\label{app:Reduction}

Interaction of phase oscillators through state-dependent phase shift may be approximated by nonpairwise coupling as shown in~\cite{Bick2017c}; here we generalize these calculations to allow for arbitrary coupling topologies.

Consider a system of~$\maxpop$ populations of~$\maxdim$ phase oscillators where $\theta_{\sigma,k}$ denotes the phase of oscillator~$k$ of population~$\sigma$. Recall that the Kuramoto order parameter
\[Z_\sigma = \frac{1}{\maxdim}\sum_{j=1}^\maxdim\exp(i\theta_{\sigma,j})\]
encodes the level of synchrony of population~$\sigma$. In particular, $R_\sigma=\abs{Z_\sigma}=1$ if and only if all oscillators are phase synchronized. Now suppose that the phase of oscillator~$k$ in population~$\sigma$ evolves according to
\begin{equation}\label{eq:DynMxNFull}
\dot\theta_{\sigma,k} = \omega+\sum_{j\neq k} g(\theta_{\sigma,j}-\theta_{\sigma,k}+\CS\Dal_\sigma)
\end{equation}
where the interaction is mediated through the coupling function
\begin{equation}\label{eq:CplngFunc}
g(\vth) = \sin(\vth+\alpha)-r\sin(a(\vth+\alpha)),
\end{equation}
$a\in\N$, and the phase-shifts
\[\Dal_\sigma = \sum_{\tau\neq\sigma} K_{\sigma\tau}(1-R_\tau^2)\]
are linear combination of the (square of the) Kuramoto order parameters. Here~$\K\geq 0$ is the overall interaction strength and $K_{\sigma\tau}\geq 0$ determines the strength of interaction between populations~$\sigma$ and~$\tau$. Set $K_\sigma := \sum_{\tau=1}^\maxpop K_{\sigma\tau}$.

To approximate the dynamics we expand the coupling function. We have
\begin{equation}\label{eq:Subst1}
\begin{split}
g(\vth +\CS\Dal_\sigma) &= g(\vth)+\CS\Dal_\sigma\cos(\vth+\alpha) +O\!\left(\CS r\right) +O\!\left(\CS^2\right).
\end{split}
\end{equation}
Using trigonometric identities we obtain
\[R_\tau^2 = \frac{1}{\maxdim^2}\sum_{p,q=1}^\maxdim \cos(\theta_{\tau,p}-\theta_{\tau,q})\]
which implies
\begin{align}\label{eq:Subst2}
R_\tau^2\cos(\vth+\alpha) &= 
\frac{1}{\maxdim}\cos(\vth+\alpha) +\frac{1}{\maxdim^2}\sum_{p\neq q}\cos(\theta_{\tau,p}-\theta_{\tau,q} + \vth+\alpha).
\end{align}
Now define the interaction functions
\begin{align}
\tGt(\vth) &=
g(\vth)
+\CS\left(1-\frac{1}{\maxdim}\right)\CS_\sigma\cos(\vth+\alpha),\\
\tGf(\theta_\tau; \vth) &=
-\frac{1}{\maxdim^2}\sum_{p\neq q}\cos(\theta_{\tau,p}-\theta_{\tau,q} + \vth+\alpha).
\end{align}
Substituting~\eqref{eq:Subst1} and~\eqref{eq:Subst2} into~\eqref{eq:DynMxNFull} and dropping the $O\!\left(\CS r\right)$, $O\big(\CS^2\big)$ terms yields
\begin{align}\label{eq:DynMxNlin}
\dot\theta_{\sigma,k} &= \omega+
\sum_{j\neq k}\left(
\tGt(\theta_{\sigma,j}-\theta_{\sigma,k})
+K\sum_{\tau=1}^\maxpop K_{\sigma\tau}\tGf(\theta_\tau; \theta_{\sigma,j}-\theta_{\sigma,k})
\right)
\end{align}
as an approximation for~\eqref{eq:DynMxNFull}. Note that the interaction between different populations is through nonpairwise coupling terms: the arguments of the trigonometric functions in~$\tGf$ depend on four phase variables rather than just differences between pairs of phases.

The $\maxpop=3$ coupled populations of $\maxdim=2$ oscillators above are determined by the coupling matrix
\[(K_{\sigma\tau}) = \left(\begin{array}{ccc}
0 & -1 & 1\\
1 & 0 & -1\\
-1 & 1 & 0
\end{array}\right).\]
Note that $\K_\sigma = 0$ for all $\sigma$. Thus,
\begin{align*}
\tGt(\vth) &= g(\vth)\\
\tGf(\theta_\tau; \vth) &= -\frac{1}{4}\big(\cos(\theta_{\tau, 1}-\theta_{\tau,2}+\vth+\alpha)+\cos(\theta_{\tau, 2}-\theta_{\tau,1}+\vth+\alpha)\big)
\end{align*}
and~\eqref{eq:DynMxNlin} reduce to equations~\eqref{eq:Dyn3x2} in the main text above.

Now consider $\maxpop=4$ populations of $\maxdim=2$ oscillators with coupling matrix
\[(K_{\sigma\tau}) = \left(\begin{array}{cccc}
0 & -1 & 1 & 1\\
1 & 0 & -1 & -1\\
-1 & 1+\dev & 0 & -1\\
-1 & 1-\dev & -1 & 0
\end{array}\right)\]
where the parameter~$\dev$ parametrizes the asymmetry between populations three and four. For $\dev=0$ the coupling corresponds to the phase-shifts
\begin{subequations}\label{eq:PhaseShift4x2}
\begin{align}
\Dal_1&=-(1-R_{2}^2)+(1-R_{3}^2)+(1-R_{4}^2),\\
\Dal_2&=(1-R_{1}^2)-(1-R_{3}^2)-(1-R_{4}^2),\\
\Dal_3&=-(1-R_{1}^2)+(1-R_{2}^2)-(1-R_{4}^2),\\
\Dal_4&=-(1-R_{1}^2)+(1-R_{2}^2)-(1-R_{3}^2).
\end{align}
\end{subequations}
considered in~\cite{Bick2017c}. We have
\begin{align}
\tGf(\theta_\tau; \vth) &= -\frac{1}{4}\big(\cos(\theta_{\tau, 1}-\theta_{\tau,2}+\vth+\alpha)+\cos(\theta_{\tau, 2}-\theta_{\tau,1}+\vth+\alpha)\big)\\
\tGt_\sigma(\vth)&=
g(\vth)+\K\left(1-\frac{1}{\maxdim}\right)\K_\sigma\cos(\vth+\alpha)
\end{align}
with $K_1 = 1$, $K_2=-1$, $K_3=-1+\dev$, $K_4=-1-\dev$. Hence, equation~\eqref{eq:Dyn4x2lin} is the nonpairwise approximation~\eqref{eq:DynMxNlin} of the system~\eqref{eq:DynMxNFull}.

%%%%%%%%%%%% BIBLIOGRAPHY

\bibliographystyle{unsrt}
\def\urlprefix{}
\def\url#1{}

\bibliography{ref} %bibtex

\begin{thebibliography}{10}

\bibitem{Bick2017c}
Christian Bick.
\newblock {Heteroclinic switching between chimeras}.
\newblock {\em Physical Review E}, 97(5):050201(R), 2018.

\bibitem{Bick2018a}
Christian Bick.
\newblock {Heteroclinic Dynamics of Localized Frequency Synchrony: Heteroclinic
  Cycles for Small Populations}.
\newblock {\em In Prep.}, pages 1--23, 2018.

\bibitem{Podvigina2011}
Olga Podvigina and Peter Ashwin.
\newblock {On local attraction properties and a stability index for
  heteroclinic connections}.
\newblock {\em Nonlinearity}, 24(3):887--929, 2011.

\bibitem{Garrido-da-Silva2016}
Liliana {Garrido-da-Silva} and Sofia B. S.~D. Castro.
\newblock {Stability of quasi-simple heteroclinic cycles}.
\newblock {\em Dynamical Systems}, pages 1--31, 2018.

\bibitem{Krupa1997}
Martin Krupa.
\newblock {Robust heteroclinic cycles}.
\newblock {\em Journal of Nonlinear Science}, 7(2):129--176, 1997.

\bibitem{Podvigina2012}
Olga Podvigina.
\newblock {Stability and bifurcations of heteroclinic cycles of type Z}.
\newblock {\em Nonlinearity}, 25(6):1887--1917, 2012.

\bibitem{Melbourne1991}
Ian Melbourne.
\newblock {An example of a nonasymptotically stable attractor}.
\newblock {\em Nonlinearity}, 4:835--844, 1991.

\bibitem{Brannath1994}
Werner Brannath.
\newblock {Heteroclinic networks on the tetrahedron}.
\newblock {\em Nonlinearity}, 7(5):1367--1384, 1994.

\bibitem{Lohse2015}
Alexander Lohse.
\newblock {Stability of heteroclinic cycles in transverse bifurcations}.
\newblock {\em Physica D}, 310:95--103, 2015.

\bibitem{Krupa1995}
Martin Krupa and Ian Melbourne.
\newblock {Asymptotic stability of heteroclinic cycles in systems with
  symmetry}.
\newblock {\em Ergodic Theory and Dynamical Systems}, 15(01):121--147, 1995.

\bibitem{Ashwin1992}
Peter Ashwin and James~W. Swift.
\newblock {The dynamics of n weakly coupled identical oscillators}.
\newblock {\em Journal of Nonlinear Science}, 2(1):69--108, 1992.

\bibitem{Bick2015c}
Christian Bick and Peter Ashwin.
\newblock {Chaotic weak chimeras and their persistence in coupled populations
  of phase oscillators}.
\newblock {\em Nonlinearity}, 29(5):1468--1486, 2016.

\bibitem{Bick2015d}
Christian Bick.
\newblock {Isotropy of Angular Frequencies and Weak Chimeras with Broken
  Symmetry}.
\newblock {\em Journal of Nonlinear Science}, 27(2):605--626, 2017.

\bibitem{Ashwin2014a}
Peter Ashwin and Oleksandr Burylko.
\newblock {Weak chimeras in minimal networks of coupled phase oscillators}.
\newblock {\em Chaos}, 25:013106, 2015.

\bibitem{Aguiar2010a}
Manuela A.~D. Aguiar and Sofia B. S.~D. Castro.
\newblock {Chaotic switching in a two-person game}.
\newblock {\em Physica D}, 239(16):1598--1609, 2010.

\bibitem{Ruelle1989}
David Ruelle.
\newblock {\em {Elements of Differentiable Dynamics and Bifurcation Theory}}.
\newblock Academic Press, New York, NY, 1989.

\bibitem{Field1991}
Michael~J. Field and James~W. Swift.
\newblock {Stationary bifurcation to limit cycles and heteroclinic cycles}.
\newblock {\em Nonlinearity}, 4(4):1001--1043, 1991.

\bibitem{Kirk1994}
Vivien Kirk and Mary Silber.
\newblock {A competition between heteroclinic cycles}.
\newblock {\em Nonlinearity}, 7(6):1605--1621, 1994.

\bibitem{Castro2014}
Sofia B. S.~D. Castro and Alexander Lohse.
\newblock {Stability in simple heteroclinic networks in $\mathbb{R}^4$}.
\newblock {\em Dynamical Systems}, 29(4):451--481, 2014.

\bibitem{Ashwin2018}
Peter Ashwin, Sofia B. S.~D. Castro, and Alexander Lohse.
\newblock {On realizing graphs as complete heteroclinic networks}.
\newblock {\em In Prep.}, 2018.

\bibitem{Field2017}
Michael~J. Field.
\newblock {Patterns of desynchronization and resynchronization in heteroclinic
  networks}.
\newblock {\em Nonlinearity}, 30(2):516--557, 2017.

\bibitem{Ermentrout2002}
Bard Ermentrout.
\newblock {\em {Simulating, Analyzing, and Animating Dynamical Systems}}.
\newblock Society for Industrial and Applied Mathematics, 2002.

\bibitem{Rodrigues2017}
Alexandre A.~P. Rodrigues.
\newblock {Attractors in complex networks}.
\newblock {\em Chaos}, 27(10):103105, 2017.

\bibitem{Hansel1993a}
David Hansel, G.~Mato, and Claude Meunier.
\newblock {Phase Dynamics for Weakly Coupled Hodgkin-Huxley Neurons}.
\newblock {\em Europhysics Letters (EPL)}, 23(5):367--372, 1993.

\bibitem{Tognoli2014}
Emmanuelle Tognoli and J.~A.~Scott Kelso.
\newblock {The Metastable Brain}.
\newblock {\em Neuron}, 81(1):35--48, 2014.

\bibitem{Uhlhaas2006}
Peter~J. Uhlhaas and Wolf Singer.
\newblock {Neural Synchrony in Brain Disorders: Relevance for Cognitive
  Dysfunctions and Pathophysiology}.
\newblock {\em Neuron}, 52(1):155--168, 2006.

\end{thebibliography}

\end{document}